\documentclass{amsart}
\usepackage{graphicx}
\usepackage{ulem}
\usepackage{bm}
\usepackage{url}
\usepackage{color}
\usepackage{amssymb, amsmath, amsthm}
\usepackage{bm}
\theoremstyle{plain}
\usepackage{graphicx}
\usepackage[aboveskip=-5pt,position=bottom]{caption}
\usepackage[numbers]{natbib}
\usepackage{enumerate,color}

\newtheorem{lemma}{Lemma}[section]

\newtheorem{theorem}[lemma]{Theorem}
\newtheorem{cor}[lemma]{Corollary}
\newtheorem{prop}[lemma]{Proposition}
\newtheorem{exam}[lemma]{\normalfont \scshape
 Example}
\newtheorem{rem}[lemma]{\normalfont \scshape Remark}

\newcommand{\R}{\mathbb{R}}
\newcommand{\N}{\mathbb{N}}

\newcommand{\norm}[1]{\left\Vert#1\right\Vert}

\newcommand{\abs}[1]{\left\vert#1\right\vert}
\newcommand{\set}[1]{\left\{#1\right\}}

\newcommand{\bfx}{\bm{x}}
\newcommand{\bfzero}{\bm{0}}
\newcommand{\bfinfty}{\bm{\infty}}

\newcommand{\bfone}{\bm{1}}
\newcommand{\bfa}{\bm{a}}

\newcommand{\bfU}{\bm{U}}
\newcommand{\bfu}{\bm{u}}

\newcommand{\bfV}{\bm{V}}

\newcommand{\bfX}{\bm{X}}
\newcommand{\bfY}{\bm{Y}}
\newcommand{\bfy}{\bm{y}}
\newcommand{\bfZ}{\bm{Z}}

\newcommand{\bfeta}{\bm{\eta}}

\newcommand{\bfxi}{\bm{\xi}}

\allowdisplaybreaks[4]

\newcommand{\RR}{\mathbb{R}}

\newcommand{\EE}{{\rm E}}

\newcommand{\ind}{\mathbf{1}}

\newcommand{\QED}

\begin{document}

\thispagestyle{empty}
\title[The max-characteristic function]{An offspring of multivariate extreme value theory: the max-characteristic function}
\author{Michael Falk$^{(1)}$ \& Gilles Stupfler$^{(2)}$}
\let\thefootnote\relax\footnote{\small $^{(1)}$ Universit\"at W\"urzburg, 97074 W\"urzburg, Germany \\
$^{(2)}$ School of Mathematical Sciences, The University of Nottingham, University Park, \\ Nottingham NG7 2RD, United Kingdom}

\email{michael.falk@uni-wuerzburg.de, Gilles.Stupfler@nottingham.ac.uk}

\begin{abstract}
This paper introduces max-characteristic functions (max-CFs), which are an offspring of multivariate extreme-value theory. A max-CF characterizes the distribution of a random vector in $\R^d$, whose components are nonnegative and have finite expectation. Pointwise convergence of max-CFs is shown to be equivalent to convergence with respect to the Wasserstein metric. The space of max-CFs is not closed in the sense of pointwise convergence. An inversion formula for max-CFs is established.
\end{abstract}

\keywords{Multivariate extreme-value theory, max-characteristic function, Wasserstein metric, convergence}

\subjclass[2010]{Primary 60E10, secondary 60F99, 60G70}

\maketitle

\section{Introduction}
\label{intro}

Multivariate extreme-value theory (MEVT) is the proper toolbox for analyzing several extremal events simul\-taneous\-ly. Its practical relevance in particular for risk assessment is, consequently, obvious. But on the other hand MEVT is by no means easy to access; its key results are formulated in a measure theoretic setup; a common thread is not visible.

Writing the `angular measure' in MEVT in terms of a random vector, however, provides the missing common thread: Every result in MEVT, every relevant probability distribution, be it a max-stable one or a generalized Pareto distribution, every relevant copula, every tail dependence coefficient etc. can be formulated using a particular kind of norm on multivariate Euclidean space, called $D$-norm; see below. For a summary of MEVT and $D$-norms we refer to \citet{fahure10, aulbf11, aulfaho11b,aulfaho11, aulfahozo14, aulfazo14, falk15}. For a review of copulas in the context of extreme-value theory, see, e.g., \citet{gennes12}.

A norm $\norm\cdot_D$ on $\R^d$ is a \textit{$D$-norm}, if there exists a random vector (rv) $\bfZ=(Z_1,\dots,Z_d)$ with $Z_i\ge 0$, ${\rm E}(Z_i)=1$, $1\le i\le d$, such that
\begin{equation*}
\norm{\bfx}_D={\rm E}\left\{\max_{1\le i\le d}\left(\abs{x_i}Z_i\right)\right\},\qquad \bfx=(x_1,\dots,x_d)\in\R^d.
\end{equation*}
In this case the rv $\bfZ$ is called \textit{generator} of $\norm\cdot_D$. Here is a list of $D$-norms and their generators:
\begin{itemize}
\item $\norm{\bfx}_\infty = \max_{1\le i\le d}\abs{x_i}$ is
generated by $\bfZ=(1,\dots,1)$,
\item $\norm{\bfx}_1=\sum_{i=1}^d\abs{x_i}$ is
generated by $\bfZ=$ random permutation of $(d,0,\dots,0)\in\R^d$ with equal probability $1/d$,
\item $\norm{\bfx}_\lambda=\left(\sum_{i=1}^d\abs{x_i}^\lambda\right)^{1/\lambda}$,
   $1<\lambda<\infty$.  Let $X_1,\dots,X_d$ be independent and identically Fr\'{e}chet-distributed random variables,  i.e., $\Pr(X_i\le x)=$ $\exp(-x^{-\lambda})$, $x>0$, $\lambda>1$. Then $\bfZ=(Z_1,\dots,Z_d)$ with
    \begin{equation*}\label{eq:frechet_generator}
    Z_i=\frac{X_i}{\Gamma(1-1/\lambda)},\quad i=1,\dots,d,
    \end{equation*}
    generates $\norm\cdot_{\lambda}$. By $\Gamma(p)=\int_0^\infty x^{p-1}e^{-x}\,dx$, $p>0$, we denote the usual Gamma function.
\end{itemize}
$D$-norms are a powerful tool when analyzing dependence in MEVT. The first letter of the word ``dependence'' is, therefore, the reason for the index $D$.

The generator of a $D$-norm is not uniquely determined, even its distribution is not. Let, for example, $X\ge 0$ be a random variable with ${\rm E}(X)=1$ and put $\bfZ=(X,\dots,X)$. Then $\bfZ$ generates $\norm\cdot_\infty$ as well. However, we can, given a generator $\bfZ$ of a $D$-norm, design a $D$-norm in a simple fashion so that it characterizes the distribution of $\bfZ$: consider the $D$-norm on $\R^{d+1}$
$$
(t,\bfx)\mapsto {\rm E}\left\{\max(\abs t,\abs{x_1}Z_1,\dots ,\abs{x_d}Z_d)\right\}.
$$
Then it turns out that the knowledge of this $D$-norm fully identifies the distribution of $\bfZ$; it is actually enough to know this $D$-norm when $t=1$, as Lemma~\ref{lem:uniqueness_of_max-cf} below shows, and this shall be the basis for our definition of a max-characteristic function.

\begin{lemma}\label{lem:uniqueness_of_max-cf}
	Let	$ \bfX = (X_1,  \dots, X_d) \geq \bfzero$, $\bfY = (Y_1,  \dots, Y_d) \geq \bfzero$ be random vectors with
	$ {\rm E}(X_i), {\rm E}(Y_i) < \infty$ for all $i \in \{ 1, \ldots , d\}$.
	If we have for each $\bfx > \bfzero \in \R^d$
\[
{\rm E}\left\{\max(1,x_1X_1,\dots ,x_dX_d)\right\} = {\rm E}\left\{\max(1,x_1Y_1, \dots, x_dY_d)\right\},
\]
then $\bfX=_d\bfY$, where ``$=_d$'' denotes equality in distribution.  	
	\end{lemma}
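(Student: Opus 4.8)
The plan is to turn the hypothesis into an identity between the joint distribution functions of $\bfX$ and $\bfY$, and then to recover pointwise equality of those distribution functions by a differentiation argument.

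First I would replace each $\max$-expectation by a tail integral. For a nonnegative random variable $W$ one has $\max(1,W)=1+(W-1)^+$, and by the layer-cake formula ${\rm E}\{(W-1)^+\}=\int_1^\infty\Pr(W>u)\,du$. Taking $W=\max_{1\le i\le d}(x_iX_i)$ and writing $F$ for the distribution function of $\bfX$, i.e.\ $F(\bfu)=\Pr(X_1\le u_1,\dots,X_d\le u_d)$, so that $\Pr(W>u)=1-F(u/x_1,\dots,u/x_d)$ (here $\bfx>\bfzero$ is used), I get
\[
{\rm E}\{\max(1,x_1X_1,\dots,x_dX_d)\}=1+\int_1^\infty\Bigl(1-F(u/x_1,\dots,u/x_d)\Bigr)\,du,
\]
the integral being finite since it is bounded by ${\rm E}(W)\le\sum_{i=1}^d x_i{\rm E}(X_i)<\infty$. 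Let $G$ be the distribution function of $\bfY$. Subtracting the corresponding identities, the hypothesis becomes
\[
\int_1^\infty\Bigl[G(u/x_1,\dots,u/x_d)-F(u/x_1,\dots,u/x_d)\Bigr]\,du=0\qquad\text{for all }\bfx>\bfzero.
\]
As $\bfx$ runs over $(0,\infty)^d$ the vector $\bfw=(1/x_1,\dots,1/x_d)$ runs over all of $(0,\infty)^d$, so this reads $\int_1^\infty h_{\bfw}(u)\,du=0$ for every $\bfw>\bfzero$, where $h_{\bfw}(u):=G(u\bfw)-F(u\bfw)$.

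Next I would use homogeneity to localize. Replacing $\bfw$ by $t\bfw$ with $t>0$ and substituting $r=tu$ gives $\int_t^\infty h_{\bfw}(r)\,dr=0$ for every $t>0$ and every $\bfw>\bfzero$, hence $\int_t^{t'}h_{\bfw}(r)\,dr=0$ for all $0<t<t'$. The maps $r\mapsto F(r\bfw)$ and $r\mapsto G(r\bfw)$ are nondecreasing (the coordinates of $\bfw$ being positive), so $h_{\bfw}$ is a difference of monotone functions and therefore has at most countably many discontinuities; at any continuity point $r_0$ of $h_{\bfw}$ the fact that $r'\mapsto\int_t^{r'}h_{\bfw}(r)\,dr$ is identically zero forces $h_{\bfw}(r_0)=0$. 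Thus $F(r\bfw)=G(r\bfw)$ on a dense subset of $(0,\infty)$, and since both sides are right-continuous in $r$ the identity holds for all $r>0$; taking $r=1$ yields $F(\bfw)=G(\bfw)$ for every $\bfw>\bfzero$. Finally, because $\bfX,\bfY\ge\bfzero$, the functions $F$ and $G$ vanish off $[0,\infty)^d$ and are determined on $[0,\infty)^d$ by right-continuity from within $(0,\infty)^d$, so $F\equiv G$ on $\R^d$, i.e.\ $\bfX=_d\bfY$.

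The hard part will be this last passage, namely recovering the pointwise identity $F=G$ from the family of radial integral identities, where one has to combine monotonicity in the radial variable with right-continuity of multivariate distribution functions; the layer-cake interchange (a routine Tonelli argument) and the change of variables are otherwise mechanical.
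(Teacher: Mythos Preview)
Your proof is correct and follows essentially the same route as the paper: rewrite the max-expectation as $1+\int_1^\infty\{1-F(u/\bfx)\}\,du$, rescale to obtain $\int_c^\infty\{G(t\bfw)-F(t\bfw)\}\,dt=0$ for all $c>0$ and $\bfw>\bfzero$, and then differentiate in $c$ to recover $F=G$. If anything, your treatment of the differentiation step (via continuity points of the difference of monotone functions and right-continuity) is more carefully argued than the paper's brief ``taking right derivatives with respect to $c$''.
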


\begin{proof}
Fubini's theorem implies ${\rm E}(X)=\int_0^\infty \Pr(X>t)\,dt$ for any random variable $X\ge 0$. consequently, we
	have for $\bfx > \bfzero$ and $c > 0$
	\begin{align*}
{\rm E}\left\{\max\left(1,\frac{X_1}{cx_1},\dots ,\frac{X_d}{cx_d}\right)\right\}
	&= \int_{0}^{\infty} 1-\Pr\left\{\max\left(1,\frac{X_1}{cx_1},\dots ,\frac{X_d}{cx_d}\right) \leq t \right\} \, dt\\
	&=	\int_{0}^{\infty}1-\Pr(1 \leq t, X_i \leq tcx_i,\, 1 \leq i \leq d) \, dt\\
	&= 1+\int_{1}^{\infty}1-\Pr\left(X_i \leq tcx_i, \,1 \leq i \leq d  \right) \, dt.
	\end{align*}
	The substitution $t \mapsto t/c $ yields that the right-hand side above equals
	\[
	1+\frac{1}{c}\int_{c}^{\infty}1-\Pr(X_i\leq tx_i, 1 \leq i \leq d) \, dt.
	\]
Repeating the preceding arguments with $Y_i$ in place of $X_i$, we obtain for all $c>0$ from the assumption the equality
$$\int_{c}^{\infty}1-\Pr(X_i\leq tx_i, 1 \leq i \leq d) \, dt
= \int_{c}^{\infty}1-\Pr(Y_i\leq tx_i, 1 \leq i \leq d)\, dt.
$$
	Taking right derivatives with respect to $c$ we obtain for $c>0$
	\begin{align*}
	1-\Pr(X_i\leq cx_i,\, 1 \leq i \leq d) = 1 - \Pr(Y_i \leq cx_i,\, 1\leq i \leq d),
	\end{align*}
	and, thus, the assertion.
\end{proof}

Let $\bfZ=(Z_1,\dots,Z_d)$ be a random vector, whose components are nonnegative and integrable. Then we call
\[
\varphi_{\bfZ}(\bfx)={\rm E}\left\{\max\left(1,x_1Z_1,\dots,x_dZ_d\right)\right\}, \quad \bfx=(x_1,\dots,x_d)\ge\bfzero\in\R^d,
\]
the \textit{max-characteristic function} (max-CF) pertaining to $\bfZ$. Lemma \ref{lem:uniqueness_of_max-cf} shows that the distribution of a nonnegative and integrable random vector $\bfZ$ is uniquely determined by its max-CF.

Some obvious properties of $\varphi_{\bfZ}$ are $\varphi_{\bfZ}(\bfzero)=1$, $\varphi_{\bfZ}(\bfx)\ge 1$ for all $\bfx$ and
\[
\varphi_{\bfZ}(r\bfx)\begin{cases}
\le r \varphi_{\bfZ}(\bfx)& \mbox{if } r\ge 1,\\
\ge r \varphi_{\bfZ}(\bfx)& \mbox{if } 0 \le r\le 1.
\end{cases}
\]
It is straightforward to show that any max-CF is a convex function and, thus, it is continuous and almost everywhere differentiable; besides, its derivative from the right exists everywhere. This fact will be used in Section \ref{subsec:an_inversion_formula}, where we will establish an inversion formula for max-CFs.

When $\bfZ$ has bounded components, we have $\varphi_{\bfZ}(\bfx)=1$ in a neighborhood of the origin. Finally, the max-CF of $\max(\bfZ_1,\bfZ_2)$ (where the max is taken componentwise) evaluated at $\bfx$ is equal to the max-CF of the vector $(\bfZ_1,\bfZ_2)$ evaluated at the point $(\bfx,\bfx)$.

\begin{rem} \upshape When $d=1$, the max-CF of a nonnegative and integrable random variable $Z$ is
\begin{eqnarray*}
\varphi_Z(x) = {\rm E}\left\{\max\left(1,xZ\right)\right\}  &=& 1+\int_{1}^{\infty} \Pr(xZ > t) \, dt \\
													&=& 1+x\int_{1/x}^{\infty} \Pr\left(Z > z \right)\, dz \\
													&=& 1+x {\rm E}\{(Z-1/x)\ind_{\{ Z>1/x\}}\}.
\end{eqnarray*}
The latter expression is connected to the expected shortfall of $Z$; see \citet{embkm97}. Indeed, if $q_Z$ is the quantile function of $Z$ then the expected shortfall of $Z$ is defined, for all $\alpha \in (0,1)$, by
$$
\mathrm{ES}_Z(\alpha)=\frac{1}{1-\alpha}\int_{\alpha}^1 q_Z(\beta)\, d\beta.
$$
When the distribution function (df) of $Z$ is continuous, defining
$$
g(\beta)=\min\left(\frac{\beta}{1-\alpha},1 \right) = \left\{
    \begin{array}{ll}
        \dfrac{\beta}{1-\alpha} & \mbox{if} \quad \beta \leq 1-\alpha, \\[5pt]
        1 & \mbox{otherwise,}
    \end{array}
\right.
$$
for all $\beta\in (0,1)$, then
$$
\mathrm{ES}_Z(\alpha)=\frac{1}{1-\alpha}\int_0^{1-\alpha} q_Z(1-\beta)\, d\beta=\int_0^1 q_Z(1-\beta)\, dg(\beta).
$$
An integration by parts and the change of variables $\beta=\Pr(Z>z)$ give
\begin{eqnarray*}
\mathrm{ES}_Z(\alpha)=\int_0^1 g(\beta) dq_Z(1-\beta) &=& \int_0^{\infty} g\{\Pr(Z>z)\} dz \\
													  &=& q_Z(\alpha)+\frac{1}{1-\alpha} \int_{q_Z(\alpha)}^{\infty} \Pr\left(Z > z \right) \, dz. 											
\end{eqnarray*}
A similar argument in the more general context of Wang distortion risk measures is given in \citet{elmstu2016}. Letting $x=x_{\alpha}=1/q_Z(\alpha)$, $\alpha\in (0,1)$, we obtain
$$
\varphi_Z(x_{\alpha}) = 1+x_{\alpha} (1-\alpha) \{\mathrm{ES}_Z(\alpha) - q_Z(\alpha)\}.
$$
If the stop-loss premium risk measure of $Z$ is defined as
$$
\mathrm{SP}_Z(\alpha) = (1-\alpha)\{\mathrm{ES}_Z(\alpha)- q_Z(\alpha)\} = \int_{q_Z(\alpha)}^{\infty} \Pr\left(Z > z \right) \, dz,
$$
see \citet{embkm97}, then
$$
\varphi_Z(x_{\alpha}) = 1+x_{\alpha} \mathrm{SP}_Z(\alpha).
$$

\end{rem}
This remark suggests that max-CFs are closely connected to well-known elementary objects such as conditional expectations and risk measures; a particular consequence of it is that computing a max-CF is, in certain cases, much easier than computing a standard CF, i.e., a {\it  Fourier transform}. The following example illustrates this idea.

\begin{exam}\upshape Let $Z$ be a random variable having the generalized Pareto distribution with location parameter $\mu\geq 0$, scale parameter $\sigma>0$ and shape parameter $\xi\in (0,1)$, whose distribution function is
$$
\Pr(Z\leq z)=1-\left( 1+\xi\frac{z-\mu}{\sigma} \right)^{-1/\xi},\qquad z\ge \mu.
$$
The expression of the characteristic function of this distribution is a fairly involved one which depends on the Gamma function evaluated in the complex plane. However, it is straightforward that, for all $x>0$,
$$
\int_x^{\infty} \Pr\left(Z > z \right)\, dz = \begin{cases} {\rm E}(Z)-x=\mu-x+\dfrac{\sigma}{1-\xi} & \mbox{if } x <\mu, \\[10pt] \dfrac{\sigma}{1-\xi} \left( 1+\xi\dfrac{x-\mu}{\sigma} \right)^{1-1/\xi} & \mbox{if } x \geq\mu. \end{cases}
$$
Hence the max-CF of $Z$ is
$$
\varphi_Z(x)=\begin{cases} x{\rm E}(Z)=x\left( \mu+\dfrac{\sigma}{1-\xi} \right) & \mbox{if } x >\dfrac{1}{\mu}, \\[10pt] 1+\dfrac{\sigma x}{1-\xi} \left( 1+\xi\dfrac{1-\mu x}{\sigma x} \right)^{1-1/\xi} & \mbox{if } x \leq\dfrac{1}{\mu}. \end{cases}
$$
\end{exam}

 The following example is a consequence of the Pickands--de Haan--Resnick representation of a max-stable distribution function; see,  e.g., \citet[Theorems~4.2.5, 4.3.1]{fahure10}. In this paper, all operations on vectors $\bfx,\bfy\in\R^d$ such as $\bfx+\bfy$, $\bfx/\bfy$, $\bfx\le \bfy$, $\max(\bfx,\bfy)$ etc. are always meant componentwise.

\begin{exam}\upshape
Let $G$ be a $d$-dimensional max-stable distribution function with identical univariate Fr\'{e}chet-margins $G_i(x)=\exp(-x^{-\alpha})$, $x>0$, $\alpha >1$. Then there exists a $D$-norm $\norm\cdot_D$ on $\R^d$ such that $G(\bfx)=\exp\left(-\norm{1/\bfx^\alpha}_D\right)$, $\bfx>\bfzero\in\R^d$. Let the random vector $\bfxi$ have distribution function $G$. Its max-CF is
\begin{align*}
\varphi_{\bfxi}(\bfx)&= 1 + \int_1^\infty 1-\exp\left(-\frac{\norm{\bfx^\alpha}_D}{y^\alpha}\right)\,dy\\
&=1+ \norm{\bfx^\alpha}_D^{1/\alpha} \int_{1/\norm{\bfx^\alpha}_D^{1/\alpha}} 1-\exp(-y^{-\alpha})\,dy,\qquad \bfx\ge \bfzero\in\R^d.
\end{align*}
\end{exam}

This paper is organized as follows. In Section \ref{sec:convergence_of_max-cf} we establish among others the fact that pointwise convergence of max-CFs is equivalent to convergence with respect to the Wasserstein distance. In Section \ref{subsec:general_remarks_on_max-cf} we list some general remarks on max-CFs. In particular, it is shown that the space of max-CFs is not closed in the sense of pointwise convergence.
An inversion formula for max-CF, by which the distribution function of a nonnegative and integrable random variable can be restored by knowing its max-CF, is established in Section \ref{subsec:an_inversion_formula}.

\section{Convergence of max-characteristic functions}\label{sec:convergence_of_max-cf}

Denote by $d_W(P,Q)$ the Wasserstein metric between two probability
distributions on $\R^d$ with finite first moments, i.e.,
\begin{equation*}
  d_W(P,Q)\\
  =\inf\set{{\rm E}\left(\norm{ \bfX- \bfY}_1\right):\,  \bfX\mathrm{\ has\ distribution\ }P,\, \bfY \mathrm{\ has\ distribution\ }Q}.
\end{equation*}
It is well known that convergence of probability measures $P_n$ to $P_0$ with respect to the
Wasserstein metric is equivalent to weak convergence together with
convergence of the sequence of moments
\[
\int_{\R^d} \norm{\bfx}_1\,P_n(d \bfx) \to_{n\to\infty} \int_{\R^d}\norm{\bfx}_1\,P_0(d \bfx);
\]
see, for example, Definition 6.8 of \citet{villani09}.

Let $\bfX,\bfY$ be integrable random vectors in $\R^d$ with distributions $P$ and $Q$. By $d_W(\bfX,\bfY)=d_W(P,Q)$ we denote the Wasserstein distance between $\bfX$ and $\bfY$. The next result states that pointwise convergence of max-CFs is equivalent to convergence with respect to the Wasserstein metric.

\begin{theorem}\label{theo:characterization_of_pointwise_convergence_of_cf}
Let $\bfZ$, $\bfZ^{(n)}$, $n\in\N$, be nonnegative and integrable random vectors in $\R^d$ with corresponding max-CF $\varphi_{\bfZ}$, $\varphi_{\bfZ^{(n)}}$, $n\in\N$. Then $\varphi_{\bfZ^{(n)}}\to_{n\to\infty}\varphi_{\bfZ}$ pointwise $\Leftrightarrow$ $d_W\left(\bfZ^{(n)},\bfZ\right)\to_{n\to\infty}0$.
\end{theorem}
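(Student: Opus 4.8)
The plan is to prove the two implications separately, with the direction ($d_W \to 0 \Rightarrow$ pointwise convergence) being the routine one and the converse requiring the characterization of Wasserstein convergence quoted above (weak convergence plus convergence of first moments).

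For the easy direction, suppose $d_W(\bfZ^{(n)},\bfZ)\to 0$. Fix $\bfx\ge\bfzero$. The map $\bfz\mapsto\max(1,x_1z_1,\dots,x_dz_d)$ is Lipschitz on $\R^d$ with respect to $\norm\cdot_1$, with Lipschitz constant $\max_{1\le i\le d}x_i$: indeed $\abs{\max(1,x_1z_1,\dots)-\max(1,x_1z_1',\dots)}\le\max_i\abs{x_iz_i-x_iz_i'}\le(\max_i x_i)\norm{\bfz-\bfz'}_1$. Taking any coupling $(\bfX,\bfY)$ of the laws of $\bfZ^{(n)}$ and $\bfZ$ and applying this pointwise bound followed by expectations gives $\abs{\varphi_{\bfZ^{(n)}}(\bfx)-\varphi_{\bfZ}(\bfx)}\le(\max_i x_i)\,{\rm E}\norm{\bfX-\bfY}_1$; infimizing over couplings yields $\abs{\varphi_{\bfZ^{(n)}}(\bfx)-\varphi_{\bfZ}(\bfx)}\le(\max_i x_i)\,d_W(\bfZ^{(n)},\bfZ)\to 0$.

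For the converse, assume $\varphi_{\bfZ^{(n)}}\to\varphi_{\bfZ}$ pointwise on $\{\bfx\ge\bfzero\}$. First I would extract tightness of the sequence $(\bfZ^{(n)})$: evaluating the max-CF along the axis directions $\bfx=t\bfe_i$ (with $\bfe_i$ the $i$-th standard basis vector) and using the one-dimensional identity from the Remark, $\varphi_{Z_i^{(n)}}(t)=1+t\,{\rm E}\{(Z_i^{(n)}-1/t)\ind_{\{Z_i^{(n)}>1/t\}}\}$, shows that $\varphi_{\bfZ^{(n)}}(t\bfe_i)\to\varphi_{\bfZ}(t\bfe_i)$ for each fixed $t>0$ controls the tail behaviour of the $i$-th marginal uniformly in $n$; since the limit is finite, this gives uniform integrability of each coordinate sequence, hence tightness of $(\bfZ^{(n)})$. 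By Prokhorov, every subsequence has a further subsequence converging weakly to some limit law $\mu$; along such a subsequence the bounded-Lipschitz argument of the easy direction (applied to the truncated functional $\bfz\mapsto\max(1,x_1z_1,\dots)\wedge K$, then letting $K\to\infty$ using the uniform integrability just established) shows $\varphi_{\bfZ^{(n)}}(\bfx)\to\int\max(1,x_1z_1,\dots)\,d\mu(\bfz)$, which must equal $\varphi_{\bfZ}(\bfx)$; by Lemma~\ref{lem:uniqueness_of_max-cf} this forces $\mu=\mathcal L(\bfZ)$. Since every subsequential weak limit is $\mathcal L(\bfZ)$, the whole sequence converges weakly to $\bfZ$. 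It remains to upgrade weak convergence to Wasserstein convergence, i.e.\ to show ${\rm E}\norm{\bfZ^{(n)}}_1\to{\rm E}\norm{\bfZ}_1$; this follows because ${\rm E}(Z_i^{(n)})=\lim_{t\to\infty}\varphi_{Z_i^{(n)}}(t)/t$ and the uniform integrability of the marginals (already obtained) lets one interchange the limits in $n$ and $t$, so ${\rm E}(Z_i^{(n)})\to{\rm E}(Z_i)$ for each $i$, whence ${\rm E}\norm{\bfZ^{(n)}}_1=\sum_i{\rm E}(Z_i^{(n)})\to\sum_i{\rm E}(Z_i)={\rm E}\norm{\bfZ}_1$. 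Invoking the quoted characterization of Wasserstein convergence (weak convergence + first-moment convergence) completes the proof.

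The main obstacle I anticipate is the passage from pointwise convergence of max-CFs to uniform integrability of the marginal sequences $(Z_i^{(n)})_n$ — this is the mechanism that simultaneously yields tightness and the convergence of first moments, and it needs the precise link between $\varphi_{Z_i^{(n)}}(t)$ and the stop-loss/shortfall transform $t\,{\rm E}\{(Z_i^{(n)}-1/t)_+\}$ exhibited in the Remark, together with a careful Moore–Osgood-type interchange of the $n\to\infty$ and $t\to\infty$ limits. Once uniform integrability is in hand, both the identification of subsequential limits (via the truncation argument and Lemma~\ref{lem:uniqueness_of_max-cf}) and the first-moment convergence are essentially bookkeeping.
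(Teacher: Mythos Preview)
Your proof is correct, but the converse direction follows a genuinely different route from the paper's. The paper never invokes tightness, Prokhorov, or Lemma~\ref{lem:uniqueness_of_max-cf}. Instead it exploits the integral identity
\[
t\,\varphi_{\bfZ^{(n)}}\!\left(\frac{\bfx}{t}\right)=t+\int_t^\infty \bigl\{1-\Pr(x_iZ_i^{(n)}\le y,\,1\le i\le d)\bigr\}\,dy
\]
directly. Convergence of first moments comes from applying this with $\bfx=\bfe_i$ and an $\varepsilon/2$ split of the integral near zero; weak convergence is then obtained by differencing,
\[
t\,\varphi_{\bfZ^{(n)}}\!\left(\frac{\bfx}{t}\right)-s\,\varphi_{\bfZ^{(n)}}\!\left(\frac{\bfx}{s}\right)=\int_s^t \Pr(x_iZ_i^{(n)}\le y,\,1\le i\le d)\,dy,
\]
and arguing by contradiction that the integrand must converge at every continuity point of the limit distribution function (with a separate treatment of boundary points where some $x_j=0$). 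Your approach---pointwise convergence of the stop-loss transforms $\Rightarrow$ uniform integrability of marginals $\Rightarrow$ tightness $\Rightarrow$ subsequential weak limits identified via Lemma~\ref{lem:uniqueness_of_max-cf}---is more abstract and modular, and reuses the uniqueness lemma nicely; the paper's is more elementary and self-contained, needing no compactness machinery. Note also that once you have weak convergence together with uniform integrability, convergence of ${\rm E}(Z_i^{(n)})$ is immediate by a standard result, so the Moore--Osgood interchange you flag as the main obstacle is not actually needed at that stage.
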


\begin{proof}
 Suppose that $d_W(\bfZ^{(n)},\bfZ)\to_{n\to\infty}0$. Then we can find versions $\bfZ^{(n)}$, $\bfZ$ such that ${\rm E}\left(\norm{\bfZ^{(n)}-\bfZ}_1\right)\to_{n\to\infty}0$. This implies, for $\bfx=(x_1,\dots,x_d)\ge 0$,
\begin{align*}
\varphi_{\bfZ^{(n)}}(\bfx) &= {\rm E}\left(\max [1,x_1 \{Z_1+(Z_1^{(n)}-Z_1) \}, \dots, x_d \{Z_d+(Z_d^{(n)}-Z_d) \} ]\right)\\
&\begin{cases}
\le {\rm E}\{\max(1,x_1Z_1,\dots,x_dZ_d) \} + \norm{\bfx}_\infty {\rm E}\left(\norm{\bfZ^{n}-\bfZ}_1\right)\\
\ge {\rm E}\left\{\max(1,x_1Z_1,\dots,x_dZ_d) \right\} - \norm{\bfx}_\infty {\rm E}\left(\norm{\bfZ^{n}-\bfZ}_1\right)
\end{cases}\\
&=\varphi_{\bfZ}(\bfx)+ o(1).
\end{align*}

Suppose next that $\varphi_{\bfZ^{(n)}}\to_{n\to\infty}\varphi_{\bfZ}$ pointwise. We have for $t>0$ and $\bfx=(x_1,\dots,x_d)\ge \bfzero$
$$
t\varphi_{\bfZ^{(n)}} \left (\frac \bfx t \right) = \EE \{ \max ( t,x_1 Z_1^{(n)},\ldots,x_d Z_d^{(n)})  \}.
$$
This gives
\begin{eqnarray*}
t\varphi_{\bfZ^{(n)}}\left( \frac \bfx t \right) &=& \int_0^{+\infty} \Pr \{ \max ( t,x_1 Z_1^{(n)},\ldots,x_d Z_d^{(n)} )>y  \} dy \\
									  &=& t+\int_t^{+\infty} \Pr \{\max ( x_1 Z_1^{(n)},\ldots,x_d Z_d^{(n)}  )>y \} dy
\end{eqnarray*}
so that
\begin{equation}\label{eqn:integral_representation_of_max_cf}
t\varphi_{\bfZ^{(n)}}\left(\frac \bfx t\right) = t + \int_t^\infty 1- \Pr (x_iZ_i^{(n)}\le y,1\le i\le d)\,dy.
\end{equation}
Now, for $\varepsilon > 0$ and $1\le i\le d$
\begin{align*}
{\rm E} (Z_i^{(n)} ) - {\rm E}\left(Z_i\right)&= \int_0^\infty 1- \Pr (Z_i^{(n)}\le y )\,dy - \int_0^\infty 1- \Pr\left(Z_i\le y\right)\,dy\\
&= \int_{\varepsilon/2}^\infty 1- \Pr (Z_i^{(n)}\le y )\,dy - \int_{\varepsilon/2}^\infty 1- \Pr\left(Z_i\le y\right)\,dy + R_{n,i}(\varepsilon)
\end{align*}
where
$$
|R_{n,i}(\varepsilon)| =\left| \int_0^{\varepsilon/2} \Pr (Z_i^{(n)}\le y ) - \Pr\left(Z_i\le y\right) \,dy \right| \leq \varepsilon/2.
$$
Equation~(\ref{eqn:integral_representation_of_max_cf}) then gives
$$
\left| {\rm E} (Z_i^{(n)} ) - {\rm E}\left(Z_i\right) \right| \leq \varepsilon \ \mbox{ for large enough } n,
$$
which entails convergence of ${\rm E} (Z_i^{(n)})$ to ${\rm E}\left(Z_i\right)$. Consequently, we have to establish weak convergence of $\bfZ^{(n)}$ to $\bfZ$. From Equation \eqref{eqn:integral_representation_of_max_cf} we obtain for $0<s<t$ and $\bfx=(x_1,\dots,x_d)\ge 0$
\begin{align}\label{eqn:limit_of_difference_of_mcf}
t \varphi_{\bfZ^{(n)}}\left(\frac \bfx t\right) - s \varphi_{\bfZ^{(n)}}\left(\frac \bfx s\right) &= \int_s^t \Pr (x_iZ_i^{(n)}\le y, 1\le i\le d )\,dy\nonumber\\
&\to_{n\to\infty} t \varphi_{\bfZ}\left(\frac \bfx t\right) - s \varphi_{\bfZ}\left(\frac \bfx s\right)\nonumber\\
&= \int_s^t \Pr\left(x_iZ_i\le y, 1\le i\le d\right)\,dy.
\end{align}

Let $\bfx=(x_1,\dots,x_d)\ge 0$ be a point of continuity of the distribution function of $\bfZ$. Suppose first that $\bfx>0$. Then we have
\[
\Pr (\bfZ^{(n)}\le \bfx ) = \Pr\left(\frac 1{x_i}Z_i^{(n)}\le 1,1\le i\le d\right).
\]
If
\[
\limsup_{n\to\infty} \Pr\left(\frac 1{x_i}Z_i^{(n)}\le 1,1\le i\le d\right) > \Pr\left(\frac 1 {x_i}Z_i\le 1,1\le i\le d\right)
\]
or
\[
\liminf_{n\to\infty} \Pr\left(\frac 1{x_i}Z_i^{(n)}\le 1,1\le i\le d\right) < \Pr\left(\frac 1 {x_i}Z_i\le 1,1\le i\le d\right),
\]
then Equation \eqref{eqn:limit_of_difference_of_mcf} readily produces a contradiction by putting $s=1$ and $t=1+\varepsilon$ or $t=1$ and $s=1-\varepsilon$ with a small $\varepsilon >0$. We, thus, have
\begin{equation}\label{eqn:weak_convergence_in_the_positive_case}
\Pr (\bfZ^{(n)}\le \bfx )\to_{n\to\infty} \Pr\left(\bfZ\le \bfx\right)
\end{equation}
for each point of continuity $\bfx=(x_1,\dots,x_d)$ of the distribution function of $\bfZ$ with strictly positive components.

Suppose next that $x_j=0$ for $j\in T\subset \set{1,\dots,d}$, $x_i>0$ for $i\not\in T$, $T\not=\emptyset$. In this case we have
\[
\Pr(\bfZ\le \bfx)= \Pr(Z_i\le x_i,i\not\in T, Z_j\le 0, j\in T)=0
\]
by the continuity from the left of the distribution function of $\bfZ$ at $\bfx$. We thus have to establish
\[
\limsup_{n\to\infty}\Pr (\bfZ^{(n)}\le \bfx ) =\limsup_{n\to\infty}\Pr\left(Z_i^{(n)}\le x_i,i\not\in T, Z_j^{(n)}\le 0, j\in T\right)=0.
\]
Suppose that
\[
\limsup_{n\to\infty}\Pr\left(Z_i^{(n)}\le x_i,i\not\in T, Z_j^{(n)}\le 0, j\in T\right) =c >0.
\]
Choose a point of continuity $\bfy>\bfx$. Then we obtain
\[
0<c\le \limsup_{n\to\infty}\Pr (\bfZ^{(n)}\le \bfy )=\Pr(\bfZ\le \bfy)
\]
by Equation \eqref{eqn:weak_convergence_in_the_positive_case}. Letting $\bfy$ converge to $\bfx$ we obtain $\Pr(\bfZ\le \bfx)\ge c>0$ and, thus, a contradiction. This completes the proof of Theorem \ref{theo:characterization_of_pointwise_convergence_of_cf}. \end{proof}

Convergence of a sequence of max-CFs is therefore stronger than the convergence of standard CFs: the example of a sequence of real-valued random variables $(Z_n)$ such that
$$
\Pr(Z_n=e^n)=\frac{1}{n} \ \mbox{ and } \ \Pr(Z_n=0)=1-\frac{1}{n}
$$
is such that $Z_n\to 0$ in distribution, as can be seen from computing the related sequence of CFs, but ${\rm E}(Z_n)=e^n/n\to \infty \neq 0$.

Corollary~\ref{corconv} below, which is obtained by simply rewriting Theorem~\ref{theo:characterization_of_pointwise_convergence_of_cf}, is tailored to applications to MEVT.

\begin{cor}\label{corconv}
Let $\bfX^{(n)}$, $n\in\N$, be independent copies of a random vector $\bfX$ in $\R^d$ that is nonnegative and integrable in each component. Let $\bfxi=(\xi_1,\dots,\xi_d)$ be a max-stable random vector with Fr\'{e}chet margins $\Pr(\xi_i\le x)=\exp\left(-1/x^{\alpha_i}\right)$, $x>0$, $\alpha_i>1$, $1\le i\le d$. Then we obtain from Theorem \ref{theo:characterization_of_pointwise_convergence_of_cf} the equivalence
\[
d_W\left(\frac{\max_{1\le i\le n}\bfX^{(i)}}{\bfa^{(n)}},\bfxi\right)\to_{n\to\infty} 0
\]
for some norming sequence $\bfzero<\bfa^{(n)}\in\R^d$ if and only if
\[
\varphi_n\to_{n\to\infty}\varphi_{\bfxi} \qquad \mbox{pointwise},
\]
where $\varphi_n$ denotes the max-CF of $\max_{1\le i\le n}\bfX^{(i)}/\bfa^{(n)}$, $n\in\N$.
\end{cor}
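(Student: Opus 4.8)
The plan is to read off the corollary from Theorem~\ref{theo:characterization_of_pointwise_convergence_of_cf}; the only thing to do is to check that the theorem is applicable to the two sequences of random vectors at hand. Fix a norming sequence $\bfzero<\bfa^{(n)}\in\R^d$ and put $\bfZ^{(n)}=\max_{1\le i\le n}\bfX^{(i)}/\bfa^{(n)}$, with all operations componentwise, and $\bfZ=\bfxi$; note that $\varphi_n=\varphi_{\bfZ^{(n)}}$ in the notation of the theorem.

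First I would verify that $\bfZ^{(n)}$ and $\bfxi$ are nonnegative and integrable in each component, which are exactly the hypotheses of Theorem~\ref{theo:characterization_of_pointwise_convergence_of_cf}. Nonnegativity is clear: $\bfX\ge\bfzero$ and $\bfa^{(n)}>\bfzero$ give $\bfZ^{(n)}\ge\bfzero$, and each Fr\'{e}chet margin of $\bfxi$ lives on $(0,\infty)$. For integrability, the $j$-th component of $\bfZ^{(n)}$ is $\max_{1\le i\le n}X_j^{(i)}/a_j^{(n)}$, and bounding the maximum of finitely many nonnegative variables by their sum gives ${\rm E}\{\max_{1\le i\le n}X_j^{(i)}\}\le \sum_{i=1}^n{\rm E}(X_j^{(i)})=n\,{\rm E}(X_j)<\infty$; for $\bfxi$, the $i$-th margin has expectation $\Gamma(1-1/\alpha_i)$, which is finite because $\alpha_i>1$ makes $1-1/\alpha_i\in(0,1)$.

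Then I would simply invoke Theorem~\ref{theo:characterization_of_pointwise_convergence_of_cf} with $\bfZ^{(n)}$ and $\bfZ=\bfxi$: it states that $\varphi_{\bfZ^{(n)}}\to_{n\to\infty}\varphi_{\bfxi}$ pointwise if and only if $d_W(\bfZ^{(n)},\bfxi)\to_{n\to\infty}0$. This equivalence holds for each fixed admissible norming sequence; taking the disjunction over all such sequences on both sides yields precisely the claimed statement, namely that $d_W$-convergence of the normalized maxima to $\bfxi$ along some norming sequence is equivalent to pointwise convergence of the corresponding max-CFs.

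I do not foresee any real obstacle: the corollary is essentially a restatement of Theorem~\ref{theo:characterization_of_pointwise_convergence_of_cf} in the domain-of-attraction language of MEVT, and the only step that uses anything beyond the theorem is the one-line observation that ${\rm E}(\xi_i)=\Gamma(1-1/\alpha_i)<\infty$, which is where the assumption $\alpha_i>1$ enters --- the same condition that is needed for the max-CF of a Fr\'{e}chet margin to be defined at all.
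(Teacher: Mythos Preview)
Your proposal is correct and matches the paper's approach: the paper gives no separate proof and simply states that the corollary ``is obtained by simply rewriting Theorem~\ref{theo:characterization_of_pointwise_convergence_of_cf}''. You have merely made explicit the hypothesis check (nonnegativity and integrability of $\bfZ^{(n)}$ and $\bfxi$, the latter via ${\rm E}(\xi_i)=\Gamma(1-1/\alpha_i)<\infty$ for $\alpha_i>1$) that the paper leaves implicit.
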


The following example shows a nice application of the use of max-CFs to the convergence of the componentwise maxima of independent generalized Pareto random vector in the total variation distance.

\begin{exam}\upshape
Let $U$ be a random variable that is uniformly distributed on $(0,1)$ and let $\bfZ=(Z_1,\dots,Z_d)$ be the generator of a $D$-norm $\norm\cdot_D$ with the additional property that each $Z_i$ is bounded,  i.e., $Z_i\le c$, $1\le i\le d$, for some constant $c\ge 1$. We require that $U$ and $\bfZ$ are independent.

Then the random vector
\[
\bfV=(V_1,\dots,V_d)= \frac 1 {U^{1/\alpha}} (Z_1^{1/\alpha},\dots,Z_d^{1/\alpha} )
\]
with $\alpha>0$ follows a \textit{multivariate generalized Pareto distribution}; see, e.g., \citet{buihz08} or \citet[Chapter 5]{fahure10}. Precisely, we have for $\bfx\ge (c^{1/\alpha},\dots,c^{1/\alpha})\in\R^d$
\begin{equation*}
\Pr(\bfV\le \bfx)= \Pr\left(U\ge \max_{1\le i\le d}\frac{Z_i}{x_i^{\alpha}}\right)
= 1- {\rm E}\left(\max_{1\le i\le d}\frac{Z_i}{x_i^{\alpha}}\right)
= 1 - \norm{\frac 1{\bfx^\alpha}}_D.
\end{equation*}
Let now $\bfV^{(1)}, \bfV^{(2)},\dots$ be independent copies of $\bfV$ and put
\[
\bfY^{(n)}= \frac{\max_{1\le i\le n}\bfV^{(i)}}{n^{1/\alpha}}.
\]
Then we have for $\bfx>\bfzero\in\R^d$ and $n$ large
\begin{equation}\label{eqn:convergence_of_gpd}
\Pr (\bfY^{(n)}\le \bfx ) = \left(1- \norm{\frac 1{n\bfx^\alpha}}_D\right)^n \to_{n\to\infty}\exp\left(-\norm{\frac 1{\bfx^\alpha}}_D \right) = \Pr(\bfxi\le \bfx),
\end{equation}
where $\bfxi$ is a max-stable random vector with identical Fr\'{e}chet margins $\Pr(\xi_i\le x)=\exp(-1/x^\alpha)$, $x>0$.
Choose $\alpha>1$; in this case the components of $\bfV$ and $\bfxi$ have finite expectations. By
writing
\[
\varphi_{\bfY^{(n)}}(\bfx) = 1 +\int_1^\infty 1- \Pr (\bfY^{(n)}\le t/\bfx )\,dt
\]
and using Equation \eqref{eqn:convergence_of_gpd}, elementary arguments such as a Taylor expansion make it possible to show that the sequence of max-CF $\varphi_{\bfY^{(n)}}$ converges pointwise to the max-CF $\varphi_{\bfxi}$ of $\bfxi$. Since convergence with respect to the Wasserstein metric is equivalent to convergence in distribution, denoted by $\to_d$, together with convergence of the  moments, we obtain from Theorem \ref{theo:characterization_of_pointwise_convergence_of_cf} that in this example we actually have both $\bfY^{(n)}\to_d\bfxi$ and ${\rm E} (Y^{(n)}_i )\to_{n\to\infty} {\rm E}(\xi_i)=\Gamma(1-1/\alpha)$ for $1\le i\le d$.
\end{exam}

\begin{exam}\upshape
Let  $\bfU^{(1)},\bfU^{(2)},\dots$ be independent copies of the random vector $\bfU=(U_1,\dots,U_d)$, which follows a copula $C$ on $\R^d$,  i.e., each $U_i$ is uniformly distributed on $(0,1)$. It is well-known (see, e.g., \citet[Section 5.2]{fahure10}) that there exists a non-degenerate random vector $\bfeta=(\eta_1,\dots,\eta_d)$ on $(-\infty,0]^d$ such that
\begin{equation*}
\bfV^{(n)}= n\left(\max_{1\le j\le n}\bfU^{(j)}-\bfone\right) \to_d\bfeta
\end{equation*}
 if and only if  there exists a $D$-norm $\norm\cdot_D$ on $\R^d$ such that, for all $\bfx\le\bfzero\in\R^d$,
\begin{equation*}
\Pr (\bfV^{(n)}\le \bfx )\to_{n\to\infty} \exp\left(-\norm{\bfx}_D\right)= G(\bfx),
\end{equation*}
 or if and only if there exists a $D$-norm $\norm\cdot_D$ on $\R^d$ such that
\begin{equation*}
C(\bfu) = 1-\norm{\bfone-\bfu}_D + o\left(\norm{\bfone-\bfu}\right)
\end{equation*}
as $\bfu\to\bfone$, uniformly for $\bfu\in[0,1]^d$.

  We have for $1\le i\le d$
  \begin{equation*}
  {\rm E}\left\{n\left(1-\max_{1\le j\le n}U_i^{(j)} \right)\right\}=\frac n{n+1}\to_{n\to\infty} 1
  \end{equation*}
and, thus, we obtain from Theorem \ref{theo:characterization_of_pointwise_convergence_of_cf} the characterization
\begin{equation*}
\bfV^{(n)}\to_d\bfeta \Leftrightarrow d_W\left(\bfV^{(n)},\bfeta\right)\to_{n\to\infty} 0 \Leftrightarrow\varphi_{-\bfV^{(n)}}\to_{n\to\infty}\varphi_{-\bfeta}\mbox{ pointwise}.
\end{equation*}
For instance, when $d=2$, straightforward computations yield that $-{\bfeta}$ arises as a weak limit above if and only if it has a max-CF of the form
$$
\varphi_{{-\bfeta}}(\bfx)= 1+x_1 \exp(-1/x_1)+x_2 \exp(-1/x_2)- \frac 1{\norm{1/\bfx}_D} \exp(-\norm{1/\bfx}_D).
$$
\end{exam}

\begin{cor}
Let $\bfZ$, $\bfZ^{(n)}$, $n\in\N$, be generators of $D$-norms on $\R^d$. Then $\varphi_{\bfZ^{(n)}}\to_{n\to\infty}\varphi_{\bfZ}$ pointwise $\Leftrightarrow$ $\bfZ^{(n)}\to_d \bfZ$.
\end{cor}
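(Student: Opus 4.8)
The plan is to deduce this directly from Theorem~\ref{theo:characterization_of_pointwise_convergence_of_cf} together with the defining moment normalization of generators of $D$-norms. First I would observe that a generator $\bfZ=(Z_1,\dots,Z_d)$ of a $D$-norm satisfies $Z_i\ge 0$ and ${\rm E}(Z_i)=1$ for $1\le i\le d$; in particular $\bfZ$ is nonnegative and integrable in each component, and likewise for every $\bfZ^{(n)}$, so Theorem~\ref{theo:characterization_of_pointwise_convergence_of_cf} applies verbatim: $\varphi_{\bfZ^{(n)}}\to_{n\to\infty}\varphi_{\bfZ}$ pointwise if and only if $d_W(\bfZ^{(n)},\bfZ)\to_{n\to\infty}0$.

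Next I would invoke the standard characterization of Wasserstein convergence recalled just before that theorem: $d_W(\bfZ^{(n)},\bfZ)\to 0$ is equivalent to $\bfZ^{(n)}\to_d\bfZ$ together with ${\rm E}(\norm{\bfZ^{(n)}}_1)\to_{n\to\infty}{\rm E}(\norm{\bfZ}_1)$. The key observation is that the second condition is automatic here: since each $Z_i^{(n)}$ has expectation $1$, one has
\[
{\rm E}\left(\norm{\bfZ^{(n)}}_1\right)=\sum_{i=1}^d {\rm E}\left(Z_i^{(n)}\right)=d={\rm E}\left(\norm{\bfZ}_1\right)
\]
for every $n$, so the moment sequence is constant and converges trivially. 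Hence $d_W(\bfZ^{(n)},\bfZ)\to 0$ reduces to plain weak convergence $\bfZ^{(n)}\to_d\bfZ$, and chaining this with the equivalence from Theorem~\ref{theo:characterization_of_pointwise_convergence_of_cf} yields the claim.

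There is essentially no genuine obstacle in this argument; it is a direct corollary. The only points requiring a little care are that the cited equivalence between Wasserstein and weak convergence presupposes the limiting law to have a finite first moment — guaranteed here because $\bfZ$, being a generator, has ${\rm E}(Z_i)=1<\infty$ for all $i$ — and that the moment-convergence condition is phrased with the $\norm\cdot_1$-norm, which is exactly the quantity that collapses to the constant $d$ for generators of $D$-norms.
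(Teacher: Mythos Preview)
Your argument is correct and is exactly the intended one: the paper states this corollary without proof, treating it as immediate from Theorem~\ref{theo:characterization_of_pointwise_convergence_of_cf} together with the Wasserstein characterization recalled just before it, and your observation that ${\rm E}(\norm{\bfZ^{(n)}}_1)=d={\rm E}(\norm{\bfZ}_1)$ for generators is precisely the missing link that makes the moment condition automatic.
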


Interestingly, the convergence of a sequence of max-CFs of generators of $D$-norms also implies pointwise convergence of the related $D$-norms.  We denote by $\norm\cdot_{D,\bfZ}$ that $D$-norm, which is generated by $\bfZ$.

\begin{cor}\label{coro:convergence_of_max-CF_implies_convergence_of_D-norms}
Let $\bfZ$, $\bfZ^{(n)}$, $n\in\N$, be generators of $D$-norms in $\R^d$ with respective max-CF $\varphi_{\bfZ}$, $\varphi_{\bfZ^{(n)}}$, $n\in\N$. Then the pointwise convergence $\varphi_{\bfZ^{(n)}}\to_{n\to\infty}\varphi_{\bfZ}$ implies $\| \cdot \|_{D,\bfZ^{(n)}}\to_{n\to\infty}\| \cdot \|_{D,\bfZ}$ pointwise.
\end{cor}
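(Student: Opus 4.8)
The plan is to reduce to Wasserstein convergence and then bound the difference of the $D$-norms directly through a near-optimal coupling. First I would apply Theorem~\ref{theo:characterization_of_pointwise_convergence_of_cf}: the hypothesis that $\varphi_{\bfZ^{(n)}}\to\varphi_{\bfZ}$ pointwise is equivalent to $d_W(\bfZ^{(n)},\bfZ)\to_{n\to\infty}0$. Observe that the convergence-of-moments half of Wasserstein convergence is automatic in the present setting, since generators of $D$-norms satisfy ${\rm E}(Z_i^{(n)})={\rm E}(Z_i)=1$ for every $i$; this is really what makes the corollary a short consequence of the theorem.

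Next, fix $\bfx=(x_1,\dots,x_d)\in\R^d$. Because $d_W(\bfZ^{(n)},\bfZ)\to 0$, for each $n$ there are versions of $\bfZ^{(n)}$ and $\bfZ$ on a common probability space with ${\rm E}(\|\bfZ^{(n)}-\bfZ\|_1)\to_{n\to\infty}0$. Using the elementary inequality $|\max_i a_i-\max_i b_i|\le\max_i|a_i-b_i|$ with $a_i=|x_i|Z_i^{(n)}$ and $b_i=|x_i|Z_i$, followed by $\max_i|x_i|\,|Z_i^{(n)}-Z_i|\le\|\bfx\|_\infty\|\bfZ^{(n)}-\bfZ\|_1$ and taking expectations, I obtain
\[
\bigl|\,\|\bfx\|_{D,\bfZ^{(n)}}-\|\bfx\|_{D,\bfZ}\,\bigr|
\le {\rm E}\Bigl|\max_{1\le i\le d}\bigl(|x_i|Z_i^{(n)}\bigr)-\max_{1\le i\le d}\bigl(|x_i|Z_i\bigr)\Bigr|
\le \|\bfx\|_\infty\,{\rm E}\bigl(\|\bfZ^{(n)}-\bfZ\|_1\bigr)\to_{n\to\infty}0.
\]
Since $\bfx$ was arbitrary, this is precisely the asserted pointwise convergence $\|\cdot\|_{D,\bfZ^{(n)}}\to\|\cdot\|_{D,\bfZ}$.

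This is essentially the computation already carried out in the first half of the proof of Theorem~\ref{theo:characterization_of_pointwise_convergence_of_cf}, with the constant $1$ inside the maximum deleted (that $1$ served there only to promote a $D$-norm to a max-CF and is irrelevant here). I do not anticipate a genuine obstacle; the two points worth making explicit are that the coupling may be chosen afresh for each $n$, since the estimate is applied one index at a time and no simultaneous construction is needed, and that the moment requirement hidden in Wasserstein convergence is automatically met by $D$-norm generators. A coupling-free variant would instead deduce uniform integrability of each coordinate from $\bfZ^{(n)}\to_d\bfZ$ together with ${\rm E}(Z_i^{(n)})\to{\rm E}(Z_i)$ and conclude by a continuous-mapping plus uniform-integrability argument, but the coupling route is the cleanest.
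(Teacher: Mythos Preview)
Your proof is correct and follows essentially the same route as the paper: invoke Theorem~\ref{theo:characterization_of_pointwise_convergence_of_cf} to pass to Wasserstein convergence, take coupled versions with ${\rm E}(\|\bfZ^{(n)}-\bfZ\|_1)\to 0$, and bound the difference of the $D$-norms by $\|\bfx\|_\infty\,{\rm E}(\|\bfZ^{(n)}-\bfZ\|_1)$. The paper merely writes the bound as $O\{{\rm E}(\|\bfZ^{(n)}-\bfZ\|_1)\}$ and alludes to the coupling with the phrase ``with proper versions of $\bfZ^{(n)}$ and $\bfZ$'', but the argument is the same.
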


\begin{proof}
We have for $\bfx=(x_1,\dots,x_d)\ge \bfzero$
\begin{align*}
\norm{\bfx}_{D^{(n)}}&= {\rm E}\left\{\max_{1\le i\le d}\left(x_iZ_i^{(n)}\right) \right\} = {\rm E}\left[\max_{1\le i\le d}\left\{x_iZ_i + x_i\left(Z_i^{(n)}-Z_i\right)\right\} \right]\\
&= {\rm E}\left\{\max_{1\le i\le d}(x_iZ_i)\right\} + O\left\{{\rm E}\left(\norm{\bfZ^{(n)}-\bfZ}_1  \right)\right\}\\
&\to_{n\to\infty}  {\rm E}\Bigl\{\max_{1\le i\le d}(x_iZ_i)\Bigr\} = \norm{\bfx}_D
\end{align*}
with proper versions of $\bfZ^{(n)}$ and $\bfZ$.
\end{proof}

\subsection{Some general remarks on max-characteristic functions}\label{subsec:general_remarks_on_max-cf}
The goal of this section is to give a few elements about the structure of the set of max-characteristic functions. This is done by constructing a particular functional mapping between max-CFs for generators of $D$-norms, and then iterating this mapping to draw our conclusions. Specifically, in what follows we let, for any $p\in (0,1]$, $T_p$ be the functional mapping which sends any function $f:\R^d\to\R$ to
$$
T_p(f)=1-p+p f\left( \frac{\cdot}{p} \right).
$$
\begin{lemma}
\label{maxCFtrans}
If $\varphi$ is the max-CF of a generator of a $D$-norm then, for any $p\in (0,1]$, so is the function $T_p(\varphi)$.
\end{lemma}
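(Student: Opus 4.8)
The plan is to write down an explicit generator of a $D$-norm whose max-CF equals $T_p(\varphi)$, rather than to manipulate $T_p(\varphi)$ abstractly. Write $\varphi=\varphi_{\bfZ}$ for some generator $\bfZ=(Z_1,\dots,Z_d)$, so that $Z_i\ge 0$ and ${\rm E}(Z_i)=1$ for every $i$. The affine shape $1-p+p\,f(\,\cdot\,/p)$ of $T_p$ is exactly what one gets by mixing over a Bernoulli switch: on an event of probability $1-p$ the candidate generator should vanish, forcing the inner maximum in its max-CF to equal $1$, and on the complementary event of probability $p$ it should look like a copy of $\bfZ$ dilated by $1/p$.

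Concretely, I would take a Bernoulli random variable $B$ with $\Pr(B=1)=p=1-\Pr(B=0)$, chosen independent of $\bfZ$, and set
\[
\bfW=(W_1,\dots,W_d)=\frac{1}{p}\,B\,\bfZ .
\]
Then $W_i\ge 0$ and, by independence, ${\rm E}(W_i)=p^{-1}\,{\rm E}(B)\,{\rm E}(Z_i)=1$ for each $i$; in particular $\bfW$ has mean-one (hence finite) components, so it is a bona fide generator of a $D$-norm and $\varphi_{\bfW}$ is well defined. It remains to identify $\varphi_{\bfW}$: conditioning on $B$, the event $\{B=0\}$ contributes $\max(1,x_1W_1,\dots,x_dW_d)=1$, while on $\{B=1\}$ we have $\max(1,x_1W_1,\dots,x_dW_d)=\max(1,x_1Z_1/p,\dots,x_dZ_d/p)$. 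Taking expectations and using the independence of $B$ and $\bfZ$ yields, for all $\bfx\ge\bfzero$,
\[
\varphi_{\bfW}(\bfx)=(1-p)\cdot 1+p\,{\rm E}\left\{\max\left(1,\frac{x_1Z_1}{p},\dots,\frac{x_dZ_d}{p}\right)\right\}=1-p+p\,\varphi\!\left(\frac{\bfx}{p}\right)=T_p(\varphi)(\bfx),
\]
which is exactly the assertion. The boundary case $p=1$ is consistent, since then $B\equiv 1$, $\bfW=\bfZ$ and $T_1$ is the identity.

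There is no real obstacle here once one spots the construction; the only non-mechanical step is recognizing the Bernoulli-thinning structure concealed in the definition of $T_p$, after which the proof reduces to the one-line moment check ${\rm E}(W_i)=1$ and a single conditioning argument. The only point deserving explicit mention is that $\bfW$ has integrable components — guaranteed by ${\rm E}(W_i)=1<\infty$ — so that the max-CF $\varphi_{\bfW}$ is genuinely defined.
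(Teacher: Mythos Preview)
Your proof is correct and follows essentially the same approach as the paper: both construct the new generator as $(B/p)\bfZ$ with $B$ an independent Bernoulli$(p)$ variable, then condition on $B$ to identify the max-CF as $T_p(\varphi)$. If anything, you are slightly more explicit than the paper in verifying the moment condition ${\rm E}(W_i)=1$ that makes $\bfW$ a bona fide generator.
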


\begin{proof}
Let $\bfZ$ be a generator of the max-CF $\varphi$. Pick a Bernoulli random variable $U$ having expectation $p$ and independent of $\bfZ$, and set
$$
\psi(\bfx)={\rm E}\left\{\max\left( 1,x_1 \frac{U}{p} Z_1,\ldots,x_d \frac{U}{p} Z_d \right) \right\}.
$$
Then clearly $\psi$ is the max-CF of the generator of a $D$-norm, and
\begin{eqnarray*}
\psi(\bfx) &=& \EE\left\{ \ind_{\{ U=0 \}} + \max\left( 1,\frac{x_1}{p} Z_1,\ldots,\frac{x_d}{p} Z_d \right) \ind_{\{ U=1 \}} \right\} \\
		&=& \Pr(U=0) + \Pr(U=1) {\rm E}\left\{ \max\left( 1,\frac{x_1}{p} Z_1,\ldots,\frac{x_d}{p} Z_d \right) \right\}
\end{eqnarray*}
by the independence of $U$ and $Z$. The result follows because of the right-hand side being exactly $1-p+p\varphi(\bfx/p)$.
\end{proof}

\begin{lemma}
\label{maxCFiter}
For any integer $k\geq 1$, the $k$th iterate of the functional $T_p$ is
$$
f\mapsto T_p^{(k)}(f)=\Pr(X\leq k)+ p^k f\left( \frac{\cdot}{p^k} \right),
$$
where $X$ is a geometric random variable having parameter $1-p$.
\end{lemma}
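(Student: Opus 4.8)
The plan is a short induction on $k$; the only point requiring any care is the identification of the accumulated constant term with the distribution function of a geometric law.

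First I would observe that $T_p$ is affine in its argument, so iterating it produces a simple geometric recursion for the constant part. Setting $g_0=f$ and $g_{m}=T_p(g_{m-1})$, one has $g_m(\bfx)=1-p+p\,g_{m-1}(\bfx/p)$; unrolling this $k$ times yields
\[
g_k(\bfx)=(1-p)\sum_{m=0}^{k-1}p^{m}+p^{k}f\!\left(\frac{\bfx}{p^{k}}\right)=1-p^{k}+p^{k}f\!\left(\frac{\bfx}{p^{k}}\right),
\]
the middle sum being an elementary geometric series (when $p=1$ this is trivial, since $T_1$ is the identity). Equivalently, one can argue by a one-line induction: the base case $k=1$ is just the definition of $T_p$, and the inductive step follows from $T_p^{(k+1)}(f)=T_p\bigl(T_p^{(k)}(f)\bigr)$ together with the arithmetic identity $1-p+p(1-p^{k})=1-p^{k+1}$ and $f((\bfx/p)/p^{k})=f(\bfx/p^{k+1})$.

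It then remains to match $1-p^{k}$ with $\Pr(X\le k)$. I would take $X$ to be the geometric random variable supported on $\{1,2,\dots\}$ with success probability $1-p$; equivalently $X=\min\{j\ge 1:U_j=0\}$, where $U_1,U_2,\dots$ are independent Bernoulli$(p)$ variables — precisely the mechanism arising when the construction in the proof of Lemma~\ref{maxCFtrans} is iterated $k$ times, the generator being multiplied by $p^{-k}U_1\cdots U_k$. Then $\Pr(X>k)=\Pr(U_1=\dots=U_k=1)=p^{k}$, hence $\Pr(X\le k)=1-p^{k}$, and the asserted formula follows. The only (mild) obstacle is bookkeeping: one must fix the convention for the geometric law so that $\Pr(X>k)=p^{k}$ holds exactly, i.e. count trials up to and including the first success rather than the number of failures preceding it; with any other convention a spurious shift in $k$ appears.
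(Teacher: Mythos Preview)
Your proof is correct and follows essentially the same route as the paper: a straightforward induction on $k$, with the inductive step reducing to the geometric-series identity $(1-p)+p\Pr(X\le k)=\Pr(X\le k+1)$, which is your $1-p+p(1-p^k)=1-p^{k+1}$ after identifying $\Pr(X\le k)=1-p^k$. Your additional remarks on the Bernoulli construction and on fixing the convention for the geometric law are helpful but not needed for the argument itself.
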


\begin{proof}
The result is clearly true for $k=1$. That the conclusion holds for every integer $k$ follows by straightforward induction because
$$
(1-p)+p \Pr(X\leq k)=(1-p)+p\sum_{j=1}^k p^{j-1} (1-p) = \sum_{j=1}^{k+1} p^{j-1} (1-p)= \Pr(X\leq k+1)
$$
whenever $X$ has a geometric distribution with parameter $1-p$.
\end{proof}

In the following lemma, the phrase ``$\bfx\to \infty \mbox{ in } \RR_+^d$'' means $\norm{\bfx}_\infty\to\infty$ and $\bfx\in  \RR_+^d$.
\begin{lemma}
\label{maxCFbound}
If $\varphi_{\bfZ}$ is the max-CF of a generator $\bfZ$ of a $D$-norm, then
$$
\max(1,\| \bfx \|_{D,Z})\leq \varphi_{\bfZ}(\bfx)\leq 1+\| \bfx \|_{D,\bfZ} \ \mbox{ for all } \ \bfx=(x_1,\ldots,x_d)\in \RR_+^d.
$$
Especially, if $\mathcal{G}$ denotes the set of all generators of $D$-norms,
$$
\sup_{\bfZ\in \mathcal{G}} \left| \frac{\varphi_{\bfZ}(\bfx)}{\| \bfx \|_{D,\bfZ}} - 1 \right| \to 0 \ \mbox{ as } \ \bfx\to \infty \mbox{ in } \RR_+^d.
$$
\end{lemma}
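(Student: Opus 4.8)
The plan is to derive the pointwise sandwich from elementary inequalities for the maximum of finitely many nonnegative numbers, and then to obtain the uniform asymptotic relation by combining that sandwich with the universal domination $\|\bfx\|_{D,\bfZ}\ge\|\bfx\|_\infty$ valid for every $D$-norm.

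First I would prove the upper bound. For nonnegative reals $b_1,\dots,b_d$ one has $\max(1,b_1,\dots,b_d)\le 1+\max(b_1,\dots,b_d)$; applying this with $b_i=x_iZ_i$ (legitimate since $\bfx\ge\bfzero$) and taking expectations gives $\varphi_{\bfZ}(\bfx)={\rm E}\{\max(1,x_1Z_1,\dots,x_dZ_d)\}\le 1+{\rm E}\{\max_{1\le i\le d}(x_iZ_i)\}=1+\|\bfx\|_{D,\bfZ}$. For the lower bound, note $\max(1,b_1,\dots,b_d)\ge 1$ and $\max(1,b_1,\dots,b_d)\ge\max(b_1,\dots,b_d)$ pointwise; taking expectations in both yields $\varphi_{\bfZ}(\bfx)\ge 1$ and $\varphi_{\bfZ}(\bfx)\ge\|\bfx\|_{D,\bfZ}$, hence $\varphi_{\bfZ}(\bfx)\ge\max(1,\|\bfx\|_{D,\bfZ})$. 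This establishes the displayed double inequality.

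For the uniform statement I would first record the standard fact that every $D$-norm dominates the sup-norm: since $\max_{1\le i\le d}(x_iZ_i)\ge x_jZ_j$ for each fixed $j$, taking expectations and using ${\rm E}(Z_j)=1$ gives $\|\bfx\|_{D,\bfZ}\ge x_j$ for every $j$, so $\|\bfx\|_{D,\bfZ}\ge\|\bfx\|_\infty$, with the bound not depending on $\bfZ$. Consequently, whenever $\bfx\in\RR_+^d$ satisfies $\|\bfx\|_\infty\ge 1$ we also have $\|\bfx\|_{D,\bfZ}\ge 1$, so $\max(1,\|\bfx\|_{D,\bfZ})=\|\bfx\|_{D,\bfZ}$ and the double inequality becomes $1\le\varphi_{\bfZ}(\bfx)/\|\bfx\|_{D,\bfZ}\le 1+1/\|\bfx\|_{D,\bfZ}\le 1+1/\|\bfx\|_\infty$. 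Taking the supremum over $\bfZ\in\mathcal{G}$ gives $\sup_{\bfZ\in\mathcal{G}}|\varphi_{\bfZ}(\bfx)/\|\bfx\|_{D,\bfZ}-1|\le 1/\|\bfx\|_\infty$, and the right-hand side vanishes as $\bfx\to\infty$ in $\RR_+^d$, i.e., as $\|\bfx\|_\infty\to\infty$.

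There is essentially no hard step here; the one point that must not be overlooked is that the lower bound $\|\bfx\|_{D,\bfZ}\ge\|\bfx\|_\infty$ is uniform in the generator, which is exactly what makes the supremum over all of $\mathcal{G}$ controllable and reduces the claimed uniform convergence to the scalar fact $1/\|\bfx\|_\infty\to 0$.
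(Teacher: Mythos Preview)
Your proof is correct and follows essentially the same route as the paper: both obtain the sandwich from the trivial pointwise inequalities $1,\ \max_i(x_iZ_i)\le\max(1,x_1Z_1,\dots,x_dZ_d)\le 1+\max_i(x_iZ_i)$ and then deduce the uniform limit from $\|\bfx\|_{D,\bfZ}\ge\|\bfx\|_\infty$. You even supply the short justification of this last inequality, which the paper simply invokes.
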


\begin{proof}
The lower bound is obtained by noting that
\begin{equation}
1\leq \max(1,x_1 Z_1,\ldots,x_d Z_d), \; \ \max(x_1 Z_1,\ldots,x_d Z_d)\leq \max(1,x_1 Z_1,\ldots,x_d Z_d)
\end{equation}
and taking expectations. The upper bound is a consequence of the inequality $\max(a,b)\leq a+b$, valid when $a,b\geq 0$. Finally, the uniform convergence result is obtained by writing
$$
1\leq \frac{\varphi_{\bfZ}(\bfx)}{\| \bfx \|_{D,\bfZ}}\leq 1+\frac{1}{\| \bfx \|_{D,\bfZ}}  \ \mbox{ for all } \ \bfZ\in \mathcal{G} \mbox{ and } \bfx\in \RR_+^d\setminus\{ \bfzero \}.
$$
Because $\| \cdot \|_{D,\bfZ} \geq \| \cdot \|_{\infty}$, this entails
$$
\sup_{\bfZ\in \mathcal{G}} \left| \frac{\varphi_{\bfZ}(\bfx)}{\| \bfx \|_{D,\bfZ}} - 1 \right| \leq \frac{1}{\| \bfx \|_{\infty}} \ \mbox{ for all } \ \bfx\in \RR_+^d\setminus\{ \bfzero \}
$$
from which the conclusion follows.
\end{proof}

It is noteworthy that the inequalities of Lemma \ref{maxCFbound} are sharp, in the sense that for $\bfZ=(1,\ldots,1)$, $\varphi_{\bfZ}(\bfx) = \max(1,\| \bfx \|_{\infty}) =\max(1,\| \bfx \|_{D,Z})$ and therefore the leftmost inequality is in fact an equality in this case, while the rightmost inequality $\varphi_{\bfZ}(\bfx)\leq a+b \| \bfx \|_{D,\bfZ}$ can only be true if $a,b\geq 1$ because of the leftmost inequality again.

Lemma \ref{maxCFbound} has the following corollary, which can also be obtained as a consequence of the monotone convergence theorem.

\begin{cor}
\label{maxCFrangecst}
No constant function can be the max-CF of a generator of a $D$-norm.
\end{cor}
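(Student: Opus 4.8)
The plan is to argue by contradiction, leaning directly on Lemma~\ref{maxCFbound}. Suppose that $\varphi_{\bfZ}$ is a constant function for some generator $\bfZ$ of a $D$-norm. The first step is to pin down the value of that constant: since $\varphi_{\bfZ}(\bfzero)={\rm E}\{\max(1,0,\dots,0)\}=1$, the constant must equal $1$, so $\varphi_{\bfZ}(\bfx)=1$ for every $\bfx\in\RR_+^d$.

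The second step is to derive a contradiction from the lower bound in Lemma~\ref{maxCFbound}, which reads $\max(1,\| \bfx \|_{D,\bfZ})\le\varphi_{\bfZ}(\bfx)$. Combined with the previous step this forces $\| \bfx \|_{D,\bfZ}\le 1$ for all $\bfx\in\RR_+^d$. But $\| \cdot \|_{D,\bfZ}$ is a genuine norm on $\R^d$ and hence unbounded on the positive orthant: fixing any $\bfx_0\in\RR_+^d$ with $\bfx_0\neq\bfzero$, positive homogeneity gives $\| t\bfx_0 \|_{D,\bfZ}=t\| \bfx_0 \|_{D,\bfZ}\to\infty$ as $t\to\infty$ because $\| \bfx_0 \|_{D,\bfZ}>0$. (Alternatively one can quote $\| \cdot \|_{D,\bfZ}\ge\| \cdot \|_\infty$, already used in the proof of Lemma~\ref{maxCFbound}.) This contradicts $\| \bfx \|_{D,\bfZ}\le 1$, which settles the claim.

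As a self-contained alternative that bypasses Lemma~\ref{maxCFbound} — the route hinted at in the statement — one would apply the monotone convergence theorem along the diagonal: for $\bfx=(t,\dots,t)$ the integrand equals $\max(1,t\max_{1\le i\le d}Z_i)$, which is nondecreasing in $t\ge 0$ and tends to $+\infty$ as $t\to\infty$ on the event $\{\max_{1\le i\le d}Z_i>0\}$; this event has positive probability since ${\rm E}(Z_i)=1>0$ for each $i$. Hence $\varphi_{\bfZ}(t,\dots,t)\to\infty$, so $\varphi_{\bfZ}$ is unbounded and in particular non-constant.

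I do not anticipate any real obstacle here; the proof is essentially a one-liner. The only two points worth stating carefully are that a putative constant max-CF must take the value $\varphi_{\bfZ}(\bfzero)=1$, and that a norm is unbounded on $\RR_+^d\setminus\{\bfzero\}$.
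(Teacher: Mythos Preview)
Your proposal is correct and matches the paper's approach exactly: the paper presents this as an immediate corollary of Lemma~\ref{maxCFbound} and notes the alternative via the monotone convergence theorem, and you have spelled out both routes. There is nothing to add.
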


Such a result is of course not true for standard CFs, since the CF of the constant random variable 0 is the constant function 1.

The next result looks at what can be said when examining the pointwise limit of iterates of the functional $T_p$ on the set of max-CFs.

\begin{prop}
\label{maxCFlim}
If $\varphi_{\bfZ}$ is the max-CF of a generator $\bfZ$ of a $D$-norm, then for any $p\in (0,1)$, the sequence of mappings $\{T_p^{(k)}(\varphi_{\bfZ})\}$ has a pointwise limit which is independent of $p$ and equal to
$$
T(\varphi_{\bfZ})=1+\| \cdot \|_{D,\bfZ}.
$$
\end{prop}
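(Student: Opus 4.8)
The plan is to combine the explicit formula for the iterates from Lemma~\ref{maxCFiter} with the two-sided envelope for max-CFs from Lemma~\ref{maxCFbound}, and then conclude by a squeeze argument. First I would invoke Lemma~\ref{maxCFiter}, which gives, for every $\bfx\in\RR_+^d$ and every integer $k\geq 1$,
$$
T_p^{(k)}(\varphi_{\bfZ})(\bfx)=\Pr(X\leq k)+p^k\varphi_{\bfZ}\!\left(\frac{\bfx}{p^k}\right),
$$
where $X$ is geometric of parameter $1-p$. Since $p\in(0,1)$, the variable $X$ is almost surely finite, so $\Pr(X\leq k)\to 1$ as $k\to\infty$ (in fact $\Pr(X\leq k)=1-p^k$); it therefore only remains to identify the limit of the second term.

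For that term I would apply Lemma~\ref{maxCFbound} at the point $\bfx/p^k$, namely
$$
\max\!\left(1,\Bigl\|\frac{\bfx}{p^k}\Bigr\|_{D,\bfZ}\right)\leq \varphi_{\bfZ}\!\left(\frac{\bfx}{p^k}\right)\leq 1+\Bigl\|\frac{\bfx}{p^k}\Bigr\|_{D,\bfZ},
$$
and then use the positive homogeneity of the norm, $\|\bfx/p^k\|_{D,\bfZ}=p^{-k}\|\bfx\|_{D,\bfZ}$. Multiplying through by $p^k$ gives
$$
\max\!\left(p^k,\|\bfx\|_{D,\bfZ}\right)\leq p^k\varphi_{\bfZ}\!\left(\frac{\bfx}{p^k}\right)\leq p^k+\|\bfx\|_{D,\bfZ}.
$$
Because $p^k\to 0$, both the left- and the right-hand side converge to $\|\bfx\|_{D,\bfZ}$, so the squeeze theorem yields $p^k\varphi_{\bfZ}(\bfx/p^k)\to\|\bfx\|_{D,\bfZ}$ for every $\bfx\in\RR_+^d$ (the case $\bfx=\bfzero$ being covered trivially, since then both bounds equal $p^k$). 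Adding the limit $1$ of the first term shows that $T_p^{(k)}(\varphi_{\bfZ})(\bfx)\to 1+\|\bfx\|_{D,\bfZ}$, which visibly does not depend on $p$; this is the claimed identity $T(\varphi_{\bfZ})=1+\|\cdot\|_{D,\bfZ}$.

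There is no real obstacle here once Lemmas~\ref{maxCFiter} and~\ref{maxCFbound} are available: the only points that deserve a word of care are the homogeneity rescaling of the $D$-norm and the separate (immediate) treatment of $\bfx=\bfzero$, where $\varphi_{\bfZ}(\bfzero)=1$ so that $p^k\varphi_{\bfZ}(\bfzero/p^k)=p^k\to 0=\|\bfzero\|_{D,\bfZ}$. As an alternative one could bypass Lemma~\ref{maxCFbound} entirely and argue by monotone convergence, observing that $p^k\varphi_{\bfZ}(\bfx/p^k)={\rm E}\{\max(p^k,x_1Z_1,\dots,x_dZ_d)\}$ decreases, as $k\to\infty$, to ${\rm E}\{\max(x_1Z_1,\dots,x_dZ_d)\}=\|\bfx\|_{D,\bfZ}$; but given the tools already developed, the squeeze argument above is the shortest route.
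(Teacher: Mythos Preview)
Your proof is correct and follows essentially the same route as the paper: both arguments invoke Lemma~\ref{maxCFiter} for the explicit iterate formula and Lemma~\ref{maxCFbound} to identify the limit of $p^k\varphi_{\bfZ}(\bfx/p^k)$. The only cosmetic difference is that the paper uses the ratio form of Lemma~\ref{maxCFbound} (writing $p^k\varphi_{\bfZ}(\bfx/p^k)=\|\bfx\|_{D,\bfZ}\cdot \varphi_{\bfZ}(\bfx/p^k)/\|\bfx/p^k\|_{D,\bfZ}$ and appealing to the uniform convergence of the ratio to~$1$), whereas you apply the two-sided inequality directly and squeeze; since the ratio convergence in the paper is itself derived from those same inequalities, the two arguments are equivalent. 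Your explicit handling of $\bfx=\bfzero$ is a nice touch that the paper leaves implicit.
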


\begin{proof}
By Lemma~\ref{maxCFiter}, we have for any $\bfx\in \RR_+^d$, $\bfx\not=\bfzero\in\R^d$, and any $k\geq 1$ that
$$
T_p^{(k)}(\varphi)(\bfx)=\Pr(X\leq k)+ p^k \varphi_{\bfZ}\left( \frac{\bfx}{p^k} \right).
$$
On one hand, when $k\to\infty$, the first term on the right-hand side converges to 1; on the other hand, because $p\in (0,1)$, we have $\bfx/p^k\to\infty$ in $\RR_+^d$ and therefore
$$
\lim_{k\to\infty} p^k \varphi_{\bfZ}\left( \frac{\bfx}{p^k} \right) = \| \bfx \|_{D,\bfZ} \lim_{k\to\infty} \frac{\varphi_{\bfZ}(\bfx/p^k)}{\| \bfx/p^k  \|_{D,\bfZ}} =  \| \bfx \|_{D,\bfZ}
$$
by Lemma~\ref{maxCFbound}. The conclusion follows by adding these limits.
\end{proof}

\begin{cor}
\label{maxCFcoro1}
If $\| \cdot \|_{D,\bfZ}$ is any $D-$norm then there is an explicit, iterative way to realize the function $1+\| \cdot \|_{D,\bfZ}$ as a limit of max-CFs. In particular, the expression of a $D-$norm is explicitly determined by the knowledge of the max-CF of any of its generators.
\end{cor}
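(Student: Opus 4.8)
The plan is to combine the three preceding results. Given an arbitrary $D$-norm $\| \cdot \|_{D,\bfZ}$, fix any of its generators $\bfZ$ and form the associated max-CF $\varphi_{\bfZ}$. First I would recall that, for any fixed $p\in(0,1)$, Lemma~\ref{maxCFtrans} ensures that each iterate $T_p^{(k)}(\varphi_{\bfZ})$ is again the max-CF of a generator of a $D$-norm, so the whole sequence $\{T_p^{(k)}(\varphi_{\bfZ})\}_{k\ge 1}$ remains inside the class of max-CFs; moreover Lemma~\ref{maxCFiter} provides the closed form
\[
T_p^{(k)}(\varphi_{\bfZ})(\bfx)=(1-p^k)+p^k\,\varphi_{\bfZ}\!\left(\frac{\bfx}{p^k}\right),
\]
since a geometric random variable $X$ with parameter $1-p$ satisfies $\Pr(X\le k)=\sum_{j=1}^k p^{j-1}(1-p)=1-p^k$. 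This is the ``explicit, iterative'' recipe: start from any max-CF and repeatedly apply the elementary transformation $f\mapsto 1-p+p\,f(\cdot/p)$.

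Next I would invoke Proposition~\ref{maxCFlim}, which states precisely that this sequence converges pointwise, independently of the choice of $p\in(0,1)$, to $T(\varphi_{\bfZ})=1+\| \cdot \|_{D,\bfZ}$. Hence $1+\| \cdot \|_{D,\bfZ}$ is realized as a pointwise limit of max-CFs, which is the first assertion. Rearranging the closed form, this is the same as the explicit limit formula
\[
\| \bfx \|_{D,\bfZ}=\lim_{k\to\infty} p^k\left(\varphi_{\bfZ}\!\left(\frac{\bfx}{p^k}\right)-1\right),\qquad \bfx\in\RR_+^d,
\]
valid for every $p\in(0,1)$.

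For the ``in particular'' clause I would simply observe that the right-hand side above depends only on the function $\varphi_{\bfZ}$; thus the max-CF of any single generator determines $\| \bfx \|_{D,\bfZ}$ at every $\bfx\in\RR_+^d$, and, a norm being symmetric in its arguments, this pins down $\| \cdot \|_{D,\bfZ}$ on all of $\R^d$. In contrast with the bare uniqueness statement underlying Lemma~\ref{lem:uniqueness_of_max-cf}, which only guarantees that the distribution of $\bfZ$ (and hence the $D$-norm) is determined by the max-CF, here the determination is effected by a concrete formula.

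I do not anticipate a genuine obstacle: every ingredient has already been established, and the corollary is essentially a repackaging of Proposition~\ref{maxCFlim} together with the observation that subtracting the constant $1$ — equivalently, reading off the term $p^k(\varphi_{\bfZ}(\bfx/p^k)-1)$ in the iteration — isolates the $D$-norm. The only points requiring a word of care are to make the identity $\Pr(X\le k)=1-p^k$ explicit, so that the constant term visibly tends to $1$, and to note that pointwise convergence on $\RR_+^d$ is enough, since a $D$-norm is determined by its values there.
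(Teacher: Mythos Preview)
Your proposal is correct and follows exactly the approach the paper intends: the corollary is stated without proof as an immediate consequence of Proposition~\ref{maxCFlim} (together with Lemmas~\ref{maxCFtrans} and~\ref{maxCFiter}), and you have simply spelled this out, including the explicit limit formula for $\|\bfx\|_{D,\bfZ}$ in terms of $\varphi_{\bfZ}$.
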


Note that this result certainly cannot be true the other way around, since a single $D-$norm can in general be generated by different generators.

The next result looks a bit further into the range of the map $\bfZ\mapsto \varphi_{\bfZ}$. By considering the generator $(1,\ldots,1)\in \RR^d$ that generates the $D$-norm $\norm\cdot_\infty$, it is obvious that $\max(1,\| \cdot \|_{\infty})$ is actually the max-CF of a generator of a $D$-norm. Looking at Lemma~\ref{maxCFbound}, one may wonder if this remains true if $\| \cdot \|_{\infty}$ is replaced by some other $D-$norm, or, in other words, if the lower bound $\max(1,\| \cdot \|_{D,Z})$ in Lemma~\ref{maxCFbound} can be achieved as a $D-$norm, and similarly for the upper bound $1+\| \cdot \|_{D,Z}$. The next result says that this is not the case.

\begin{prop}
\label{maxCFrange}
Let $\bfZ$ be a generator of a $D$-norm.

\begin{enumerate}
\item[(i)] The mapping $1+\| \cdot \|_{D,\bfZ}$ cannot be the max-CF of a generator of a $D$-norm.
\item[(ii)] If moreover $\| \cdot \|_{D,\bfZ}\neq \| \cdot \|_{\infty}$, then $\max(1,\| \cdot \|_{D,\bfZ})$ cannot be the max-CF of a generator of a $D$-norm.
\end{enumerate}
\end{prop}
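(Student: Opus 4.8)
The plan is to play off the two bounds of Lemma~\ref{maxCFbound} against the behaviour of a genuine max-CF near the origin and at infinity. For part (i), suppose for contradiction that $1+\norm\cdot_{D,\bfZ}$ is itself the max-CF of some generator $\bfW$ of a $D$-norm, say $1+\norm\bfx_{D,\bfZ}=\varphi_{\bfW}(\bfx)={\rm E}\{\max(1,x_1W_1,\dots,x_dW_d)\}$ for all $\bfx\ge\bfzero$. I would evaluate along a ray $\bfx=t\bfe$ for a fixed direction $\bfe\ge\bfzero$, $\bfe\neq\bfzero$, and let $t\downarrow 0$. The left-hand side behaves like $1+t\norm\bfe_{D,\bfZ}$, so its right derivative at $t=0$ is $\norm\bfe_{D,\bfZ}>0$. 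On the other hand, $\varphi_{\bfW}(t\bfe)={\rm E}\{\max(1,t\,e_1W_1,\dots,t\,e_dW_d)\}$; since $\max(1,t\,e_1W_1,\dots)\le 1+t\max_i(e_iW_i)$ and, more to the point, $\max(1,t\,e_1W_1,\dots)-1 = (t\max_i(e_iW_i)-1)_+\le t\max_i(e_iW_i)\ind_{\{t\max_i(e_iW_i)>1\}}$, dominated convergence gives $t^{-1}(\varphi_{\bfW}(t\bfe)-1)\to 0$ as $t\downarrow0$. Hence the right derivative of $\varphi_{\bfW}$ at the origin is $0$, contradicting $\norm\bfe_{D,\bfZ}>0$. (Equivalently, one invokes the remark in the text that a max-CF of a generator with bounded components is $\equiv 1$ near the origin, but the ray argument handles unbounded $\bfW$ too.)

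For part (ii), assume $\max(1,\norm\cdot_{D,\bfZ})$ is the max-CF of a generator $\bfW$. The idea is to exploit the region where $\norm\bfx_{D,\bfZ}\le 1$: there $\varphi_{\bfW}(\bfx)=1$, which by the neighbourhood remark forces $\max(1,x_1W_1,\dots,x_dW_d)=1$ a.s., i.e. $x_iW_i\le 1$ a.s. for every such $\bfx$. Since $\norm\cdot_{D,\bfZ}\le\norm\cdot_\infty$ always, there are nonzero $\bfx$ with $\norm\bfx_{D,\bfZ}\le1$; taking suprema over all such $\bfx$ in each coordinate direction gives $W_i\le 1/\sup\{x_i:\norm\bfx_{D,\bfZ}\le1\}=\norm{\bfe_i}_{D,\bfZ}$ a.s. (here $\bfe_i$ is the $i$-th standard basis vector), and combined with ${\rm E}(W_i)=1$ while $\norm{\bfe_i}_{D,\bfZ}=1$ (a $D$-norm restricted to a coordinate axis is the absolute value), we get $W_i=1$ a.s. for every $i$. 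Thus $\bfW=(1,\dots,1)$ a.s., whence $\varphi_{\bfW}=\max(1,\norm\cdot_\infty)$ and therefore $\norm\cdot_{D,\bfZ}=\norm\cdot_\infty$, contradicting the hypothesis. An alternative, cleaner route is to compare the two candidate functions with the true max-CF bounds: any max-CF $\varphi_{\bfW}$ satisfies $\varphi_{\bfW}(\bfx)\ge\norm\bfx_{D,\bfW}\ge\norm\bfx_\infty$, and on $\{\norm\bfx_{D,\bfZ}<\norm\bfx_\infty\}$ — nonempty precisely when $\norm\cdot_{D,\bfZ}\neq\norm\cdot_\infty$ — picking $\bfx$ there with $\norm\bfx_\infty>1>\norm\bfx_{D,\bfZ}$ yields $\max(1,\norm\bfx_{D,\bfZ})=1<\norm\bfx_\infty\le\varphi_{\bfW}(\bfx)$, the desired contradiction.

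The main obstacle is part (ii): one must be careful that the set $\{\bfx\ge\bfzero:\norm\bfx_{D,\bfZ}\neq\norm\bfx_\infty\}$ is genuinely nonempty and, crucially, contains a point with $\norm\bfx_\infty>1>\norm\bfx_{D,\bfZ}$ — this needs the elementary facts that any $D$-norm dominates $\norm\cdot_\infty$, that equality everywhere is the only alternative to strict inequality somewhere (by homogeneity, if $\norm\bfx_{D,\bfZ}=\norm\bfx_\infty$ fails at one point it fails on an open cone), and that both norms are continuous and positively homogeneous so that the ray through such a point can be scaled to land in the slab $\norm\cdot_{D,\bfZ}<1<\norm\cdot_\infty$. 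Part (i) is essentially immediate once the right-derivative-at-the-origin computation is in place; no serious difficulty there.
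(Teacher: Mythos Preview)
Your argument for part~(i) is correct and considerably more direct than the paper's. The paper proceeds via the iterated operators $T_p^{(k)}$, first deducing $\norm\cdot_{D,\bfY}=\norm\cdot_{D,\bfZ}$ from Proposition~\ref{maxCFlim}, and then invoking Theorem~\ref{theo:characterization_of_pointwise_convergence_of_cf} together with an explicit sequence of generators converging to $\bfzero$ in distribution to reach a contradiction. Your computation of the right derivative at the origin along a ray, using dominated convergence for $t^{-1}(t M-1)_+\le M\ind_{\{M>1/t\}}$, bypasses all of that machinery.

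Your \emph{first} approach to part~(ii) is also correct and again shorter than the paper's. The paper first uses the $T_p$-iteration to identify $\norm\cdot_{D,\bfY}=\norm\cdot_{D,\bfZ}$ and then specializes to coordinate directions to force $Y_i=1$ a.s.; you go straight to $\bfx=\bfe_i$, observe $\norm{\bfe_i}_{D,\bfZ}={\rm E}(Z_i)=1$, conclude ${\rm E}\{\max(1,W_i)\}=1$ hence $W_i\le 1$ a.s., and combine with ${\rm E}(W_i)=1$. One correction: you write ``$\norm\cdot_{D,\bfZ}\le\norm\cdot_\infty$ always'', which is backwards; every $D$-norm satisfies $\norm\cdot_\infty\le\norm\cdot_D\le\norm\cdot_1$. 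Fortunately this misstatement is harmless for the first argument, since all you actually use is that the unit ball of $\norm\cdot_{D,\bfZ}$ contains each $\bfe_i$.

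Your ``alternative, cleaner route'' for~(ii), however, does not work, and the obstacle paragraph compounds the error. Because $\norm\bfx_{D,\bfZ}\ge\norm\bfx_\infty$ for every $\bfx$, the set $\{\norm\bfx_{D,\bfZ}<\norm\bfx_\infty\}$ is \emph{always empty}, regardless of whether $\norm\cdot_{D,\bfZ}=\norm\cdot_\infty$ or not. In particular there is no $\bfx$ with $\norm\bfx_\infty>1>\norm\bfx_{D,\bfZ}$, so the contradiction you describe cannot be produced this way. Drop the alternative route entirely and keep the first argument.
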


\begin{proof}
We start by proving (i). Suppose there is  a generator of a $D$-norm $\bfY$ such that $\varphi_{\bfY}=1+\| \cdot \|_{D,\bfZ}$. By Proposition~\ref{maxCFlim}, the sequence of mappings $T_p^{(k)}(\varphi_{\bfY})$, $k\geq 1$, has the pointwise limit
$$
T(\varphi_{\bfY})=1+\| \cdot \|_{D,\bfY}.
$$
Besides, if $X$ is a geometric random variable with parameter $1-p$, then for all $\bfx\in \RR_+^d$
$$
T_p^{(k)}(\varphi_{\bfY})(\bfx)=T_p^{(k)}(1+\| \cdot \|_{D,\bfZ})(\bfx)=\Pr(X\leq k)+ p^k (1+\| \bfx/p^k \|_{D,\bfZ})\to 1+\| \bfx \|_{D,\bfZ}
$$
as $k\to\infty$, so that $\| \cdot \|_{D,\bfY} = \| \cdot \|_{D,\bfZ}$.

We now conclude by using Theorem~\ref{theo:characterization_of_pointwise_convergence_of_cf}: the random vector
$$
\bfY^{(n)} = \frac{U_1 \cdots U_n}{p^n} \bf Y
$$
where $U_1,\ldots,U_n$ are independent Bernoulli random variables with mean $p$ which are independent of $\bf Y$, is the generator of a $D$-norm, with max-CF
$$
\varphi_{\bfY^{(n)}} = T_p^{(n)}(\varphi_{\bfY});
$$
see the proof of Lemma~\ref{maxCFtrans} and Lemma~\ref{maxCFiter}. By Proposition~\ref{maxCFlim}, $\varphi_{\bfY^{(n)}}\to_{n\to\infty}1+\| \cdot \|_{D,\bfY}=\varphi_{\bfY}$ pointwise, and thus Theorem~\ref{theo:characterization_of_pointwise_convergence_of_cf} yields $d_W\left(\bfY^{(n)},\bfY\right)\to_{n\to\infty}0$.
But
$$
\Pr (\bfY^{(n)} \neq {\bf 0} )=p^n\to_{n\to\infty}0
$$
which shows that $\bfY^{(n)}$ converges in distribution to $\bfzero$. This is a contradiction and (i) is proven.

We turn to the proof of (ii). Again, suppose there is a generator of a $D$-norm $\bfY$ such that $\varphi_{\bfY}=\max(1,\| \cdot \|_{D,\bfZ})$. We shall prove that $\| \cdot \|_{D,\bfZ}=\| \cdot \|_{\infty}$. The sequence of mappings $T_p^{(k)}(\varphi_{\bfY})$, $k\geq 1$, has the pointwise limit
$$
T(\varphi_{\bfY})=1+\| \cdot \|_{D,\bfY},
$$
and if $X$ is a geometric random variable with parameter $1-p$ then for all $\bfx\in \RR_+^d$,
\begin{align*}
T_p^{(k)}(\varphi_{\bfY})(\bfx)&=T_p^{(k)}\{\max(1,\| \cdot \|_{D,\bfZ})\}(\bfx)\\
&= \Pr(X\leq k)+ p^k \max(1,\| \bfx/p^k  \|_{D,\bfZ}) \\
&= \Pr(X\leq k)+ \max(p^k,\| \bfx  \|_{D,\bfZ}) \\
&\to  1+\| \bfx \|_{D,\bfZ}
\end{align*}
as $k\to\infty$, so that $\| \cdot \|_{D,\bfY} = \| \cdot \|_{D,\bfZ}$. Consequently:
$$
 \varphi_{\bfY}(\bfx)= {\rm E}\{\max(1,x_1 Y_1,\ldots,x_d Y_d)\} =\max[1,\EE\{\max(x_1 Y_1,\ldots,x_d Y_d)\}]
$$
for all $\bfx=(x_1,\ldots,x_d)\in \RR_+^d$. For all $i\in \{ 1,\ldots,d\}$, specializing $x_j=0$ for $j\neq i$ and $x_i=1$ gives
$$
\EE\{\max(1,Y_i)\} =\max\{1,\EE(Y_i)\} = 1 \ \mbox{ for all } \ i\in \{ 1,\ldots,d\}.
$$
Because $Y_i$ has expectation 1, this implies that the random variables $\max(1,Y_i)-1$ and $\max(1,Y_i)-Y_i$, being nonnegative and having expectation zero, must be almost surely zero. In other words, $Y_i\leq 1$ and $Y_i\geq 1$ almost surely, and thus $Y_i=1$ almost surely for all $i$. But then $Y=(1,\ldots,1)$ is a generator of the norm $\| \cdot \|_{\infty}$, so that $\| \cdot \|_{D,\bfZ}=\| \cdot \|_{D,\bfY}=\| \cdot \|_{\infty}$. The proof is complete.
\end{proof}

Combining Propositions~\ref{maxCFlim} and~\ref{maxCFrange}(i), we get the following corollary.

\begin{cor}
\label{maxCFcoro2}
The set of max-CFs of generators of $D$-norms is not closed in the sense of pointwise convergence.
\end{cor}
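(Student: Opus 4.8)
The plan is to exhibit a concrete sequence of max-CFs of generators of $D$-norms whose pointwise limit lies outside that set; Propositions~\ref{maxCFlim} and~\ref{maxCFrange}(i) will do essentially all of the work. First I would fix an arbitrary generator $\bfZ$ of a $D$-norm (for concreteness one may take $\bfZ=(1,\dots,1)$, but any generator works) and a parameter $p\in(0,1)$, and consider the sequence $\psi_k:=T_p^{(k)}(\varphi_{\bfZ})$, $k\ge 1$. Iterating Lemma~\ref{maxCFtrans} $k$ times shows that each $\psi_k$ is again the max-CF of a generator of a $D$-norm, so the whole sequence $(\psi_k)_{k\ge 1}$ is contained in the set of max-CFs of generators of $D$-norms.

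Next I would apply Proposition~\ref{maxCFlim}, which states that $\psi_k\to T(\varphi_{\bfZ})=1+\|\cdot\|_{D,\bfZ}$ pointwise as $k\to\infty$; this limit is moreover independent of the chosen $p$. Finally, Proposition~\ref{maxCFrange}(i) asserts that the function $1+\|\cdot\|_{D,\bfZ}$ is never the max-CF of a generator of a $D$-norm. Hence the pointwise limit of the sequence $(\psi_k)$ fails to belong to the set of max-CFs of generators of $D$-norms, and that set is therefore not closed with respect to pointwise convergence.

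I do not expect any real obstacle here: the substantive arguments have already been carried out in Propositions~\ref{maxCFlim} and~\ref{maxCFrange}(i), and this corollary is just their juxtaposition. The only points needing (minor) care are verifying that the approximating sequence genuinely lies in the set in question — which is precisely the iterated application of Lemma~\ref{maxCFtrans} — and observing that the construction is nonvacuous, since generators of $D$-norms do exist, so at least one such sequence can be formed.
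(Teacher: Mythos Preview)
Your proposal is correct and matches the paper's approach exactly: the paper simply states that the corollary follows by combining Propositions~\ref{maxCFlim} and~\ref{maxCFrange}(i), and you have spelled out precisely how that combination works, including the use of Lemma~\ref{maxCFtrans} to ensure the approximating sequence lies in the set.
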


It should be noted that Corollary~\ref{maxCFcoro2} is also true for usual characteristic functions, as we can see with the example of a sequence of random variables $(X_n)$ such that for every $n$, $X_n$ is normally distributed, centered, and has variance $n^2$. Then
$$
\varphi_n(t)=\EE\left(e^{itX_n} \right) = e^{-n^2 t^2/2} \ \mbox{ for all } \ t\in\RR
$$
so that the sequence $(\varphi_n)$ converges pointwise to the indicator function of $\{ 0 \}$, which is not a characteristic function because it is not continuous.

\subsection{An inversion formula for max-characteristic functions}\label{subsec:an_inversion_formula}

As mentioned in the Introduction, any max-CF is a convex function and thus it is continuous and almost everywhere differentiable; furthermore, its derivative from the right exists everywhere.

Recall that for a vector $\bfx\in \RR^d$, the notation $\bfx>\bfzero$ means that $\bfx$ has strictly positive components. The next result contains both an inversion formula for max-CFs and a criterion for a function to be a max-CF.

\begin{prop}
\label{invmaxCF}
Let $\bfZ$ be a nonnegative and integrable random vector with max-CF $\varphi_{\bfZ}$.
\begin{enumerate}
\item[(i)] We have, for all $\bfx=(x_1,\ldots,x_d)>\bfzero$,
$$
\Pr(Z_j\leq x_j, \ 1\leq j\leq d)= \frac{\partial_{+}}{\partial t} \left\{ t\varphi_{\bfZ}\left( \frac{1}{t\bfx} \right) \right\}\Bigr|_{\substack{t=1}}
$$
where $\partial_{+}/\partial t$ denotes the right derivative with respect to the univariate variable $t$.
\item[(ii)] If $\psi$ is a continuously differentiable function such that
\begin{eqnarray*}
\frac{\partial}{\partial t} \left\{ t\psi\left( \frac{1}{t\bfx} \right) \right\}\Bigr|_{\substack{t=1}} &=& \Pr(Z_j\leq x_j, \ 1\leq j\leq d) \\
\mbox{and } \ \lim_{t\to\infty} t\left\{ \psi\left( \frac{1}{t\bfx} \right) -1 \right\} &=& 0
\end{eqnarray*}
for all $\bfx=(x_1,\ldots,x_d)>\bfzero$, then $\psi=\varphi_{\bfZ}$ on $(0,\infty)^d$.
\end{enumerate}
\end{prop}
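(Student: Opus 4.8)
The plan is to get (i) straight from the Fubini-type integral representation already used for Theorem~\ref{theo:characterization_of_pointwise_convergence_of_cf}, and then to bootstrap (ii) from (i) by a rescaling trick which promotes the hypothesis --- a derivative condition at the single point $t=1$, but holding for \emph{every} $\bfx>\bfzero$ --- into a differential identity valid at all positive arguments.

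For (i), I would fix $\bfx=(x_1,\dots,x_d)>\bfzero$, set $W_j=Z_j/x_j$, and observe that $t\,\varphi_{\bfZ}(1/(t\bfx))={\rm E}\{\max(t,W_1,\dots,W_d)\}$. Fubini's theorem (exactly as in Equation~\eqref{eqn:integral_representation_of_max_cf}) then gives, for $t>0$,
\[
t\,\varphi_{\bfZ}\!\left(\frac1{t\bfx}\right)=t+\int_t^\infty\bigl(1-\Pr(W_j\le y,\ 1\le j\le d)\bigr)\,dy .
\]
Here the integrand $g(y):=1-\Pr(W_j\le y,\ 1\le j\le d)$ is nonincreasing and right-continuous (it is $1$ minus the distribution function of $\bfZ$ evaluated along the ray $y\mapsto y\bfx$), and $\int_0^\infty g(y)\,dy={\rm E}\{\max_jW_j\}<\infty$. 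Consequently $t\mapsto\int_t^\infty g$ has right derivative $-g(t)$ everywhere, and evaluating at $t=1$ yields that the right derivative of $t\,\varphi_{\bfZ}(1/(t\bfx))$ at $t=1$ equals $1-g(1)=\Pr(W_j\le1,\ 1\le j\le d)=\Pr(Z_j\le x_j,\ 1\le j\le d)$, which is (i).

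For (ii), I would fix $\bfx>\bfzero$ and abbreviate $F(\bfx):=\Pr(Z_j\le x_j,\ 1\le j\le d)$, $\Phi(u):=u\,\psi(1/(u\bfx))$ and $\Psi(u):=u\,\varphi_{\bfZ}(1/(u\bfx))$ for $u>0$. The key move is to apply the first hypothesis with $u\bfx$ in place of $\bfx$ and use $t\,\psi(1/(tu\bfx))=u^{-1}\Phi(tu)$: this gives $\Phi'(u)=F(u\bfx)$ for every $u>0$ (the derivative exists because $\psi\in C^1$). By (i) applied along the same ray --- or directly from the displayed formula above --- $\Psi$ has right derivative $F(u\bfx)$ at every $u>0$, and being convex it is the integral of its right derivative, so $\Psi(v)-\Psi(u)=\int_u^vF(w\bfx)\,dw=\Phi(v)-\Phi(u)$ for $0<u\le v$; hence $\Phi-\Psi$ is constant on $(0,\infty)$. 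Letting $v\to\infty$: the second hypothesis forces $\Phi(v)-v=v\{\psi(1/(v\bfx))-1\}\to0$, while $\Psi(v)-v=\int_v^\infty(1-F(w\bfx))\,dw\to0$ because $\max_jZ_j/x_j$ is integrable, so the constant is $0$ and $\Phi\equiv\Psi$. In particular $\psi(1/\bfx)=\Phi(1)=\Psi(1)=\varphi_{\bfZ}(1/\bfx)$; since $\bfx>\bfzero$ is arbitrary and $1/\bfx$ sweeps out $(0,\infty)^d$, this gives $\psi=\varphi_{\bfZ}$ on $(0,\infty)^d$.

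I expect the Fubini computation and the right-derivative bookkeeping in (i) to be routine. The one step with real content is the rescaling in (ii) that upgrades the hypothesis at $t=1$ into the pointwise identity $\Phi'(u)=F(u\bfx)$; the subtle point is precisely that the derivative is assumed only at $t=1$, so one must exploit that it holds simultaneously for all $\bfx>\bfzero$. Once that is in hand, matching $\Phi$ and $\Psi$ via their common antiderivative and their behaviour at infinity is immediate.
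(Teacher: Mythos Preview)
Your proposal is correct and follows essentially the same route as the paper: derive the integral representation $t\varphi_{\bfZ}(1/(t\bfx))=t+\int_t^\infty\{1-\Pr(Z_j\le yx_j,\,1\le j\le d)\}\,dy$, differentiate from the right to get (i), and for (ii) upgrade the hypothesis at $t=1$ to all $t>0$ by the rescaling $\bfx\mapsto u\bfx$, then integrate and match the behaviour at infinity. The only cosmetic difference is that the paper carries out the rescaling step by first expanding $\frac{\partial}{\partial t}\{t\psi(1/(t\bfx))\}$ via the chain rule into $\psi(1/\bfx)-\sum_j x_j^{-1}\partial_j\psi(1/\bfx)$ and then substituting $\bfx\mapsto t\bfx$, whereas you use the cleaner identity $t\psi(1/(tu\bfx))=u^{-1}\Phi(tu)$ directly; both arrive at $\Phi'(u)=F(u\bfx)$ and finish the same way.
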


\begin{proof}
Notice first that, similarly to equation~(\ref{eqn:integral_representation_of_max_cf}), we have
$$
t\varphi_{\bfZ}\left( \frac{1}{t\bfx} \right) = t+\int_t^{+\infty} 1-\Pr(Z_j\leq yx_j, \ 1\leq j \leq d)\, dy.
$$
Note that the above representation yields $\lim_{t\to\infty}t[\varphi_{\bfZ}\{1/(t\bfx)\}-1] =0$.

To show (i), notice that taking right derivatives with respect to $t$ yields
$$
\frac{\partial_{+}}{\partial t} \left\{t\varphi_{\bfZ}\left( \frac{1}{t\bfx} \right) \right\} = \Pr(Z_j\leq tx_j, \ 1\leq j\leq d).
$$
Setting $t=1$ concludes the proof of (i). To prove (ii), remark that
$$
\frac{\partial}{\partial t} \left\{ t\psi\left( \frac{1}{t\bfx} \right) \right\} = \psi\left( \frac{1}{t\bfx} \right) - \frac{1}{t}\sum_{i=1}^d  \frac{1}{x_j} \partial_j \psi\left( \frac{1}{t\bfx} \right) \ \mbox{ for all } \ t>0,
$$
where $\partial_j \psi$ denotes the partial derivative of $\psi$ with respect to its $j$th component. In particular, because
$$
\Pr(Z_j\leq x_j, \ 1\leq j\leq d) = \frac{\partial}{\partial t} \left\{ t\psi\left( \frac{1}{t\bfx} \right) \right\}\Bigr|_{\substack{t=1}} = \psi\left( \frac{1}{\bfx} \right) - \sum_{i=1}^d  \frac{1}{x_j} \partial_j \psi\left( \frac{1}{\bfx} \right)
$$
we obtain by replacing $\bfx$ with $t\bfx$ that for all $t>0$,
$$
\frac{\partial}{\partial t} \left\{ t\psi\left( \frac{1}{t\bfx} \right) \right\} = \Pr(Z_j\leq t x_j, \ 1\leq j\leq d).
$$
Write now
\begin{eqnarray*}
t\psi\left( \frac{1}{t\bfx} \right) &=& t -\int_t^{\infty} \frac{\partial}{\partial y} \left[ y\left\{ \psi\left( \frac{1}{y\bfx} \right) -1 \right\} \right] dy \\
									&=& t +\int_t^{\infty} 1-\Pr(Z_j\leq yx_j, \ 1\leq j \leq d)\, dy \\
									&=& t\varphi_{\bfZ}\left( \frac{1}{t\bfx} \right)
\end{eqnarray*}
to conclude the proof of (ii).
\end{proof}

\begin{rem}\upshape
This result makes it possible to improve upon the result of Proposition~\ref{maxCFrange}(i). Assume that $\varphi_{\bfZ}$ is the max-CF of a nonnegative and integrable random vector such that
$$
\varphi_{\bfZ}(\bfx)=\psi(1,\| \bfx \|),
$$
where $\psi:\RR_+^2\to \RR_+$ is a 1-homogeneous function and $\| \cdot \|$ is a norm on $\RR^d$. Then informally,
$$
\frac{\partial_{+}}{\partial t} \left\{ t\varphi_{\bfZ}\left( \frac{1}{t\bfx} \right) \right\}\Bigr|_{\substack{t=1}} = \frac{\partial_{+}}{\partial t}\left\{ \psi(t,\| 1/\bfx \|) \right\}\Bigr|_{\substack{t=1}} = \partial_{1,+}\psi(1,\| 1/\bfx \|)
$$
if $\partial_{1,+}$ denotes the right derivative with respect to the first component. In particular,
$$
\frac{\partial_{+}}{\partial t} \left\{ t\varphi_{\bfZ}\left( \frac{1}{t\bfx} \right) \right\}\Bigr|_{\substack{t=1}} \to \begin{cases} \partial_{1,+}\psi(1,0) & \mbox{if } \bfx\to \bfinfty, \\[5pt] \partial_{1,+}\psi(1,\infty) & \mbox{if } \bfx\to \bf0. \end{cases}
$$
In other words, by Proposition~\ref{invmaxCF}, unless $\partial_{1,+}\psi(1,y)$ both converges to 1 as $y\to 0$ and to 0 as $y\to\infty$, the function $\psi(1,\| \cdot \|)$ cannot be a max-CF. Applying this to the example $\psi(x,y)=x+y$, we find the result of Proposition~\ref{maxCFrange}(i) again.
\end{rem}

\section*{Acknowledgment} The authors are indebted to Professor  Chen Zhou for stimulating discussions, which led to Lemma \ref{lem:uniqueness_of_max-cf} and the definition of a max-CF. The authors are also grateful to two anonymous reviewers for their careful reading of the manuscript and their constructive remarks.

\end{document}